\def\R{\mathbb{R}}
\def\s{\mathcal{S}}
\def\pik{\pi_{\ker(L)}}
\DeclareMathOperator{\dls}{\deg}
\DeclareMathOperator{\db}{\deg}
\DeclareMathOperator{\id}{id}
\DeclareMathOperator{\fp}{fp}
\newtheorem{theorem}{Theorem}[section]
\newtheorem{proposition}[theorem]{Proposition}  
\newtheorem{definition}[theorem]{Definition} 
\newtheorem{obs}[theorem]{Remark}
 \newtheorem{lemma}[theorem]{Lemma}
\newtheorem{corollary}{Corollary}[theorem]
\newcommand\restr[2]{{% we make the whole thing an ordinary symbol
  \left.\kern-\nulldelimiterspace % automatically resize the bar with \right
  #1 % the function
  \vphantom{\big|} % pretend it's a little taller at normal size
  \right|_{#2} % this is the delimiter
  }}
 \title{On an affinity principle by Krasnoselskii}  
 \author{P. Amster and J. Epstein} 
 \date{}
\begin{document}

\maketitle

	\begin{center}
 Departamento de Matem\'atica\\ Facultad de Ciencias Exactas y Naturales, Universidad de Buenos Aires\\ \& IMAS-CONICET\\ 
		Ciudad Universitaria - Pabell\'on I, 1428, Buenos Aires, Argentina
		\vspace{0.2cm}
	
	\end{center}

\begin{abstract}
An abstract formulation of a duality 
principle established by Krasnoselskii is presented. Under appropriate conditions, it  shall be shown that,  if the solutions of a nonlinear functional equation can be obtained by finding fixed points of certain operators 
in possibly different Banach spaces, then these operators share some topological properties. 
%Specifically, it is proven that the topological index at each fixed point is the same . 

\end{abstract}

\section{Introduction}

Let us consider the $T$-periodic problem for the system
\begin{equation}\label{1}
x'(t)=f(t,x(t))
\end{equation}
where $f:\R\times \R^n\to \R^n$ is smooth and $T$-periodic in its first coordinate. 
As is well known, there are several  approaches to search solutions as fixed points of certain mappings, specifically: 

 \begin{enumerate}
\item \textbf{In the phase space.} Define the Poincar\'e map $P:\textrm{dom}(P)\subset \R^n\to \R^n$ given by 
$P(x_0):=\Phi(T,x_0)$. Here, $\Phi$ denotes the associated flow, that is, $\Phi(t,x_0):=x(t)$, where  $x$ is the unique solution of 
(\ref{1}) satisfying the initial condition $x(0)=x_0$. Due to the periodicity of $f$, if $x_0$ is a fixed point of $P$ then the corresponding solution $x(t)$ is globally defined and $T$-periodic. Conversely, if $x(t)$ is a $T$-periodic solution of (\ref{1}), then $x_0:=x(0)\in \textrm{dom}(P)$ and $P(x_0)=x_0$. 

\item  \textbf{In a functional space}. 
Fix an appropriate Banach space $X$, e.g. 
$$X=C[0,T]:=C([0,T],\R^n)$$
or
 $$X=C_T:=\{x\in C(\R,\R^n): x(t+T)\equiv x(t)\}$$
 %$$X=L^p(0,T)$$
 and define an operator $K:X\to X$ in such a way that the fixed points of $K$ can be identified with the set of $T$-periodic solutions of  (\ref{1}). A typical choice of such operators is $K_1:C[0,T]\to C[0,T]$ given by
 \begin{equation}\label{K1}
  K_1(x)(t):= x(T) + \int_0^t f(s,x(s))\,ds.
 \end{equation}
  \end{enumerate}
In both situations, solutions can be found 
by the use of different fixed points theorems or, more generally, employing degree theory. To this end, one may try, in the first case, to find an open bounded set $U$ such that $\overline U\subset \textrm{dom}(P)$ and $$\deg_B(I-P,U,0)\ne 0,$$
where $\deg_B$ denotes the Brouwer degree and, in the second case, 
an open bounded set $U_X\subset X$ such that
$$\deg_{LS}(I-K,U_X,0)\ne 0,$$
where $\deg_{LS}$ is now the (infinite dimensional) Leray-Schauder degree. 

At this point, it is worthy recalling the following paragraph of \cite{BK}:

\textit{Every approach to the specific problem in question has its advantages and drawbacks, and therefore the
picture we obtain when investigating a problem becomes more complete if we manage to find internal connections between different approaches and use the positive aspects of each of them. 
}

This is the goal of the so-called  theory of affinity, 
whose origin can be traced back to the 
works of M. A. Krasnoselskii  in the early sixties
(see
\cite{BK, K, krasno} and the references therein). 
In the previous 
context, a remarkable result is a relatedness principle, also called \textit{duality principle} (see \cite{maw}) which, 
roughly speaking, says that, despite their different nature,  
the   mapping $I-P$ and the operator $I-K$ have the same degree.  
In more rigorous terms, it is required that the sets $U$ and $U_X$ have \textit{common core}, which means: 

\begin{enumerate}
 \item $P$ and $K$ have no fixed points on $\partial U$ and $\partial U_X$ respectively. 
 \item If $x_0\in U$ is a fixed point of $P$, then the corresponding  solution $x(t)$ satisfies
 $\alpha(x)\in U_X$,  
 where  $\alpha$ maps the $T$-periodic functions  with their related elements of $X$. The definition of $\alpha$ depends on the choice of  $X$; for example, if $X=C_T$ then $\alpha(x):=x$ and if $X=C[0,T]$ then
 $\alpha(x):=x|_{[0,T]}$.
 \item If $x\in U_X$ is a fixed point of $K$, then $x_0:=x(0)\in U$. 
\end{enumerate}
 In other words, when $X=C[0,T]$, the last two properties say that if a $T$-periodic solution has initial value in $U$ then its restriction to $[0,T]$ belongs to $U_X$ and vice versa.  
 
In this context, one of the early results by Krasnoselskii reads
 $$\deg_B(I-P,U,0)=\deg_{LS}(I-K_1,U_1,0),
 $$
 where $K_1$ is the operator defined by (\ref{K1}) and the open bounded sets 
 $U\subset \R^n$ and $U_1\subset C[0,T]$ have common core. 
 This equivalence can be extended to other operators, both in $C[0,T]$ and $C_T$ or a variety of spaces, according to the equation under study.
 The proof relies on the homotopy invariance of the degree; however, this is not straightforward because $K_1$ and $P$ are defined in different spaces. In order to avoid this difficulty, the key idea consists in noticing that, if one identifies $\R^n$ with the subset of constant functions of $C[0,T]$, 
 then  $P$ can be identified with a compact operator $K_2$ which is, in turn, homotopic to $K_1$. 
 Then, the proof follows from the fact that, since $\textrm{Im}(K_2)\subset \R^n$, the Leray-Schauder degree of $I-K_2$ coincides with the Brouwer degree of $(I-K_2)|_{\R^n}$.
  Regarded from an abstract point of view,  this procedure 
can be understood as a conjugacy between operators and, as we shall see, this allows  to extend the relatedness properties to other situations.

The main goal of this paper is to give a general version of Krasnoselskii relatedness principle in abstract Banach 
spaces in such a way that, when applied to the specific 
cases treated in the literature, the original results are retrieved. We refer to \cite{K} for a very complete exposition, including various boundary conditions. 
The scope of the applications is not limited to ODEs: we shall present simple examples that include a nonlocal boundary problem for an elliptic PDE and a system of delay differential equations. 

The paper is organized as follows. In the next section, we shall present the abstract 
setting  of the Krasnoselskii results. 
Section 
\ref{lxnx} is devoted to the particular problem $L(x)=N(x)$ 
which includes a wide range of semilinear equations and is frequently treated in the context of the coincidence degree theory \cite{GM}. Finally, some applications are given in Section \ref{applic}. In particular, a relatedness principle for a delay equation shall not be deduced from the problem $L(x)=N(x)$ but, instead, from the abstract formulation given in section \ref{abstr-sec}. Finally, in section \ref{further} we introduce further comments
and pose some questions that shall be treated in subsequent works. 

Throughout the paper we shall use the generic notation `deg' to 
refer to the topological degree and reserve $\deg_{B}$ and 
$\deg_{LS}$ only for those cases in which we want to emphasize the fact that we are working in a finite or an infinite dimensional space, respectively. Also, 
since we shall be dealing with different spaces, in the second section we shall use, for the sake of clarity, the notation $\id_X$ to refer to the identity operator in the space $X$.

\section{The abstract result}
\label{abstr-sec}

In this section, we shall prove an abstract theorem that compares the degree 
of certain operators in a general context.  In the first place, let us establish  an useful lemma: 

 \begin{lemma}\label{conjug} (Conjugacy lemma)
Let $X$ and $\tilde{X}$ be Banach spaces, let $S$ be a subset of $X$ and let $U\subset X$ be  bounded and open such that  $S\cap \partial U=\emptyset$. Consider the diagram

 \begin{figure}[h]
    \begin{center}
\includegraphics[scale=.45]{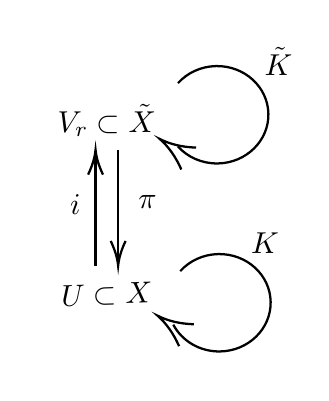}
\end{center}
\end{figure}

where:
\begin{itemize}
    \item $\pi$ and $ i$ are linear and continuous, with $\pi\circ  i=\id_{X}$,
    \item $K$ is compact such that $\fp(K)=S$, 
    where $\fp$ stands for the set of fixed points of a mapping,
    \item $\tilde{K}=i \circ K\circ \pi$
    \item $V_r=\pi^{-1}(U)\cap B(0,r)$.
\end{itemize}
Then $\fp(\tilde{K})= i(S)$. Moreover, there exists  $r_0>0$ 
such that if $r>r_0$ then  
$\partial V_r \cap  i(S)=\emptyset$ and 
$$
\dls(\id_X -K,U, 0)=\dls(\id_{\tilde{X}} - \tilde{K},V_r, 0).
$$
\end{lemma}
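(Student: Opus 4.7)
The plan is to settle the fixed-point identity $\fp(\tilde K) = i(S)$, then the boundary condition $\partial V_r \cap i(S) = \emptyset$ for large $r$, and finally the degree equality. For the first, I would verify both inclusions by direct computation. Given $x \in S$, one has $\tilde K(i(x)) = i(K(\pi(i(x)))) = i(K(x)) = i(x)$, using $\pi \circ i = \id_X$. Conversely, any $y \in \fp(\tilde K)$ satisfies $y = i(K(\pi(y))) \in i(X)$, so writing $y = i(x)$ yields $\pi(y) = x$ and $x = \pi(y) = \pi(i(K(x))) = K(x)$, whence $y \in i(S)$.

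For the boundary condition, I would first observe that $S \cap \overline U = S \cap U$ (using $S \cap \partial U = \emptyset$) is a set of fixed points of $K$ contained in $\overline U$, hence in the relatively compact image $K(\overline U)$ and therefore bounded. By continuity of $i$ and boundedness of $\overline U$, the set $i(\overline U)$ is bounded, so I would set $r_0 := \sup\{\|i(x)\| : x \in \overline U\}$. For $r > r_0$ and $x \in S \cap U$, $\pi(i(x)) = x \in U$ and $\|i(x)\| < r$, so $i(x) \in V_r$, not on $\partial V_r$. For $x \in S \setminus U$, the hypothesis $S \cap \partial U = \emptyset$ forces $x \in X \setminus \overline U$; then a small open neighborhood $W$ of $x$ disjoint from $\overline U$ produces the open set $\pi^{-1}(W) \ni i(x)$, disjoint from $\pi^{-1}(U) \supset V_r$, so $i(x) \notin \overline{V_r}$.

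For the degree equality, the identity $\pi \circ i = \id_X$ shows that $i \circ \pi : \tilde X \to \tilde X$ is a continuous linear projection with range $i(X)$, so $i(X)$ is closed in $\tilde X$ and $i : X \to i(X)$ is a topological isomorphism by the open mapping theorem. The composition $\tilde K = i \circ K \circ \pi$ is compact and takes values in $i(X)$, so applying the reduction property of Leray-Schauder degree to this closed subspace gives
$$
\dls(\id_{\tilde X} - \tilde K, V_r, 0) = \dls\bigl((\id_{\tilde X} - \tilde K)|_{i(X)},\, V_r \cap i(X),\, 0\bigr).
$$
For $r > r_0$ one has $V_r \cap i(X) = i(U)$, and under the isomorphism $i$ the restriction $(\id_{\tilde X} - \tilde K)|_{i(X)}$ is conjugate to $\id_X - K$ on $U$; degree invariance under this isomorphism then yields $\dls(\id_X - K, U, 0)$. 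I expect the main obstacle to be the rigorous invocation of the reduction property for a possibly infinite-dimensional closed subspace; a back-up approach is to exploit the splitting $\tilde X = i(X) \oplus \ker \pi$ induced by $i \circ \pi$, in which $\id_{\tilde X} - \tilde K$ is block-diagonal of the form $(\id_{i(X)} - \tilde K|_{i(X)}) \oplus \id_{\ker \pi}$, and to conclude by a direct homotopy or product argument.
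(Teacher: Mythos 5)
Your proof is correct and takes essentially the same approach as the paper: both arguments establish $\fp(\tilde K)=i(S)$ by the same two-inclusion computation, choose $r_0$ so that $i(\overline U)\subset B(0,r_0)$, and then reduce the Leray--Schauder degree to the closed subspace $E=i(X)$ (the range of the projector $i\circ\pi$), on which $\id_{\tilde X}-\tilde K$ is conjugate to $\id_X-K$ via the isomorphism $i$. The extra details you supply (the explicit check that $i(S)$ avoids $\partial V_r$, and the open-mapping justification that $i:X\to i(X)$ is an isomorphism) are correct and merely make explicit what the paper leaves implicit.
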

%\textcolor{blue}{Mejor poner $\deg$? }
\begin{proof}
It is readily seen that $i\circ \pi:\tilde X\to \tilde X$ is a projector; thus, the subspace  $E:= i(X)\subset \tilde{X}$ is closed and $( i\circ \pi)\vert_E=\id_E$.
Let $\tilde{x}\in \fp(\tilde{K})$ and $x:=\pi(\tilde{x})$, then  
$$
\tilde{x}=( i\circ K \circ \pi )(\tilde{x}).
$$ 
Because $\tilde{x}\in E$, it follows that  $\tilde{x}= i(\pi(\tilde{x}))=  i(x)$ and hence  
$$
x=(\pi\circ i)(x)=\pi(\tilde x)= (K\circ \pi)(\tilde x)=K(x).
$$ 
Thus, $x\in S$  and, consequently, $\tilde{x}\in i(S)$.
Conversely, assume that $\tilde{x}= i(x)$ for some $x$ such that $x=K(x)$, then $\pi(\tilde{x})=x$ and 
$$\tilde{x}=i(x)= i(K(x))= i(K(\pi(\tilde{x})))=\tilde{K}(\tilde{x}).$$
%Since $S_U$ is bounded, it follows that  
%$\tilde i(S_U)$ is bounded. Fix $r_0$ such that  $\tilde i(S_U)\subset B(0,r_0)$ and let  $r>r_0$. 
{Since $U$ is bounded, we may fix $r_0$ such that $ i(U)\subset B(r_0,0)$ and  take $r>r_0$.}
It is clear that 
$\partial V_r\cap  i(S)=\emptyset$, 
$V_r \cap  i(S)= i(S\cap U)$ and $V_r\cap E= i(U)$.
Moreover, using the properties of the {Leray-Schauder degree}, we conclude:
\begin{align*}
    \dls(\id_{\tilde{X}}-\tilde{K}, V_r, 0)&=\dls((\id_{\tilde X}-\tilde{K})|_E,V_r \cap E, 0)\\
                                       &=\dls( i\circ (\id_X - K) \circ \pi, i(U), 0)\\
                                       &=\dls(\id_X-K,U, 0).
\end{align*}
\end{proof}
As a corollary, we obtain the following 
\begin{proposition}\label{conjugacion}
In the situation of the previous lemma, let $\tilde U\subset \tilde{X}$
be open and bounded such that $\partial \tilde U\cap  i(S)=\emptyset$ and $\tilde U\cap  i(S)= i(S\cap U)$. Then
$$
\dls(\id_{\tilde{X}} - \tilde{K},\tilde U,0)=\dls(\id_X - K,U,0).
$$
\end{proposition}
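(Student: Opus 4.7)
The plan is to reduce everything to Lemma \ref{conjug} via the excision property of the Leray--Schauder degree. First I would pick $r>r_{0}$ large enough that the lemma applies to $V_{r}=\pi^{-1}(U)\cap B(0,r)$, which already gives
$$\deg(\id_{\tilde X}-\tilde K, V_{r}, 0)=\deg(\id_X - K, U, 0).$$
So the task is merely to show that replacing $V_{r}$ by the set $\tilde U$ of the statement does not change the degree.

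Next I would identify the fixed point sets on both sides. By Lemma \ref{conjug}, $\fp(\tilde K)=i(S)$, so the hypothesis $\tilde U\cap i(S)= i(S\cap U)$ says exactly that the fixed points of $\tilde K$ lying in $\tilde U$ are the points of $i(S\cap U)$. For $V_{r}$, the identity $\pi\circ i=\id_X$ together with $i(U)\subset B(0,r_{0})\subset B(0,r)$ gives $V_{r}\cap i(S)=i(S\cap U)$ as well (this is already observed in the proof of the lemma). Hence $\fp(\tilde K)\cap \tilde U=\fp(\tilde K)\cap V_{r}=i(S\cap U)$, and this common set is contained in the open set $W:=\tilde U\cap V_{r}$.

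Finally, I would apply the excision property twice. Since $\fp(\tilde K)\cap(\tilde U\setminus W)=\emptyset$ and $\fp(\tilde K)\cap(V_{r}\setminus W)=\emptyset$, excision yields
$$\deg(\id_{\tilde X}-\tilde K,\tilde U,0)=\deg(\id_{\tilde X}-\tilde K,W,0)=\deg(\id_{\tilde X}-\tilde K,V_{r},0),$$
and combining with the displayed identity from Lemma \ref{conjug} gives the desired equality. The only real technical point is to check that no fixed point escapes into the symmetric difference $\tilde U\triangle V_{r}$; but this is automatic because $i(S\cap U)\subset i(U)\subset B(0,r_{0})\cap \pi^{-1}(U)\subset V_{r}$ and $i(S\cap U)\subset \tilde U$ by hypothesis. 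There is no serious obstacle beyond bookkeeping; the proof is essentially a one-line excision argument, and the heart of the work is already contained in Lemma \ref{conjug}.
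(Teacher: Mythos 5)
Your proposal is correct and follows essentially the same route as the paper: reduce to Lemma \ref{conjug} for a large $V_r$ and then pass from $V_r$ to $\tilde U$ by excision, using that both open sets contain exactly the same fixed points $i(S\cap U)$ of $\tilde K$. The only difference is that you spell out the double excision through the intermediate set $W=\tilde U\cap V_r$, which the paper leaves implicit.
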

\begin{proof}
Let $V_r$ be as in  the previous lemma, then for $r\gg 0$ we know that  
$$
\dls(\id_{\tilde{X}} - \tilde{K},V_r, 0)=\dls(\id_X - K,U, 0).
$$
Because the fixed points of  $\tilde{K}$ in $\tilde U$ coincide with the fixed points of $K$ in $V_r$, the proof follows from the excision property of the degree. 
\end{proof}

In what follows, we shall consider
diagrams $\mathcal B$ of the form 
  
 \begin{figure}[h]
    \begin{center}
\includegraphics[scale=.45]{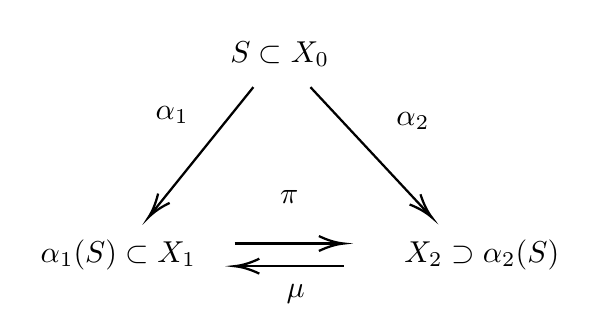}
\end{center}
\end{figure}
\noindent where:
\begin{itemize}
\item $X_0,X_1,X_2$ are Banach spaces.
\item $\alpha_1,\alpha_2$ are linear and continuous,  $\mu$ is compact,  $\pi$ is linear, continuous and onto. 
\item Restricted to $S$, $\alpha_1(S)$ and $\alpha_2(S)$, the diagram is commutative, namely:
$$\pi\circ \alpha_1(x)= \alpha_2(x),\qquad \mu \circ \alpha_2(x)= \alpha_1(x)
$$for every $x\in S$. 
\end{itemize} 
 
 In the applications,  $S$ shall be the set of  solutions of a  differential equation under 
certain boundary condition, 
 $X_0$  a space of functions satisfying the boundary condition and $X_1$, $X_2$ specific spaces in which solutions can be found as fixed points of accurate operators. For example, in the periodic case we may take $X_0=C_T$, 
 $X_1$ as the space of continuous functions over one period and $X_2$ as the phase space.

\begin{definition}
Let $U_i\subset X_i$  be open and bounded and consider the sets $S_i:=\alpha_i(S)\cap U_i$  with $i=1,2$. 
We shall say that $U_1$ and $U_2$ have \textbf{common core} with respect to the diagram $\mathcal{B}$ if
\begin{enumerate}
\item $\alpha_i(S)\cap \partial U_i =\emptyset$
\item There exists  $S_0\subset S$ such that $\alpha_i (S_0)=S_i$.
\end{enumerate}
\end{definition}

%\textcolor{blue}
{\begin{obs}
In particular, the last condition ensures that 
$$(\mu\circ\pi)|_{S_1}= \id_{S_1},\qquad (\pi\circ\mu)|_{S_2}= \id_{S_2}.
$$
\end{obs}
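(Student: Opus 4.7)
The plan is to verify each identity pointwise by chasing an arbitrary element of $S_i$ back to a preimage in $S_0\subset S$, and then exploiting the commutativity hypotheses of the diagram $\mathcal{B}$, which are only assumed to hold on $S$ (and on $\alpha_i(S)$).

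Concretely, for the first identity take $y\in S_1=\alpha_1(S)\cap U_1$. Condition (2) in the definition of common core furnishes some $x\in S_0\subset S$ with $\alpha_1(x)=y$. Since the commutativity relations hold on $S$, I can first push $y$ through $\pi$ to get
\[
\pi(y)=\pi(\alpha_1(x))=\alpha_2(x),
\]
and then apply $\mu$ to obtain $\mu(\pi(y))=\mu(\alpha_2(x))=\alpha_1(x)=y$. So $(\mu\circ\pi)|_{S_1}=\id_{S_1}$. The second identity is entirely symmetric: for $y\in S_2$, pick $x\in S_0$ with $\alpha_2(x)=y$; then $\mu(y)=\alpha_1(x)$ and hence $\pi(\mu(y))=\pi(\alpha_1(x))=\alpha_2(x)=y$.

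There is essentially no obstacle; the result is a direct unwinding of the definitions. The only subtlety worth flagging is that the commutativity relations $\pi\circ\alpha_1=\alpha_2$ and $\mu\circ\alpha_2=\alpha_1$ are hypothesized only on $S$, not on all of $X_0$, which is precisely why the second condition in the common core definition (the existence of a common preimage $S_0\subset S$) is indispensable: without it, an element of $S_1$ might a priori be of the form $\alpha_1(x)$ for some $x\in S$, while pulling it back through $\alpha_2$ requires a single $x\in S$ serving both roles. Once $S_0$ is produced, the proof is a two-line diagram chase in each direction.
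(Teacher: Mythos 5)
Your argument is correct and coincides with the computation the paper itself performs immediately after the remark, where it verifies $\alpha_i(S)\subset\fp(K_i)$ for $K_1=\mu\circ\pi$ and $K_2=\pi\circ\mu$ by exactly this two-step diagram chase. One caveat: your closing claim that the existence of $S_0$ is indispensable here is not accurate --- for $y\in S_1\subset\alpha_1(S)$, any single $x\in S$ with $\alpha_1(x)=y$ already serves both roles, because the relations $\pi\circ\alpha_1(x)=\alpha_2(x)$ and $\mu\circ\alpha_2(x)=\alpha_1(x)$ are assumed for every $x\in S$; this is precisely why the paper obtains the stronger inclusion $\alpha_i(S)\subset\fp(K_i)$ without invoking the common-core hypothesis at all.
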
}
\noindent In the previous setting, we may also consider fixed point operators 
$K_i:X_i\to X_i$, that is, such that 
 $\fp(K_i)=\alpha_i(S)$. 
 The resulting diagram $\mathcal B$ shall be called a 
 \textbf{Krasnoselskii diagram}
if: 
\begin{enumerate}
    \item $U_i$ have common core with respect to $\mathcal{B}$.
    \item $K_i$ are fixed point operators.
    \item $\pi$  has a continuous right inverse  $i\in L(X_2,X_1)$, that is:  $\pi\circ i=\id_{X_2}$.
 
\end{enumerate}
 
 \begin{figure}[h]
    \begin{center}
\includegraphics[scale=.45]{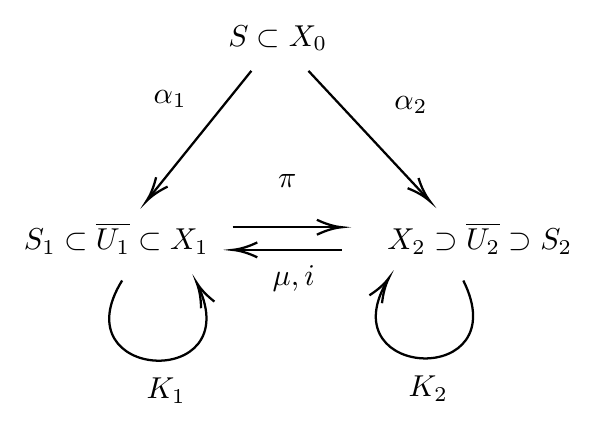}
\end{center}
\end{figure}

%\textcolor{blue}
{As a special case, taking  
$$K_1:=\mu\circ\pi, \qquad K_2:=\pi\circ\mu
$$
it is readily verified that 
$\alpha_i(S)\subset \fp(K_i)$. Indeed, 
if $x=\alpha_1(\tilde x)$, then 
$$(\mu\circ\pi)(x) = (\mu\circ\pi\circ\alpha_1)(\tilde x) = (\mu\circ\alpha_2)(\tilde x)= \alpha_1(\tilde x)=x.  
$$
The inclusion $\alpha_2(S)\subset \fp(K_2)$ is analogous.}
 
 This allows to give a direct proof of the following result, which can be regarded as a particular case of \cite[Thm 26.1]{krasno} for the
composition of arbitrary operators in Banach spaces.  
 
\begin{theorem}\label{teorema principal} 
In the previous situation, assume that $\mathcal B$ is a   
Krasnoselskii diagram for  $K_1=\mu\circ\pi$ and $K_2=\pi\circ\mu$. Then $$\dls(\id_{X_1}-K_1,U_1,0)=\dls(\id_{X_2}-K_2,U_2,0)$$
\end{theorem}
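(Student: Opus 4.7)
The plan is to introduce an auxiliary operator $\tilde K_2 := i\circ K_2\circ \pi$ on $X_1$ and use it as a bridge: Lemma~\ref{conjug} will equate $\deg(\id_{X_2}-K_2,U_2,0)$ with the degree of $\id_{X_1}-\tilde K_2$ on some open ball-intersection in $X_1$, so that the entire argument can be carried out inside $X_1$. Concretely, I apply Lemma~\ref{conjug} with the roles $X\leftrightarrow X_2$ and $\tilde X\leftrightarrow X_1$, the given $\pi,\, i$, and $K=K_2$ (which is compact since $\mu$ is). For $r$ large enough,
$$\dls(\id_{X_1}-\tilde K_2,V_r,0)\;=\;\dls(\id_{X_2}-K_2,U_2,0),\qquad V_r=\pi^{-1}(U_2)\cap B(0,r).$$
It remains to prove $\dls(\id_{X_1}-K_1,U_1,0)=\dls(\id_{X_1}-\tilde K_2,V_r,0)$.

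To compare $K_1$ and $\tilde K_2$ inside $X_1$, I would use the straight-line homotopy $H_t:=(1-t)K_1+t\tilde K_2$. Since $\pi\circ i=\id_{X_2}$ one has $\tilde K_2=(i\pi)\circ K_1$, so $H_t=[(1-t)\id_{X_1}+t\,(i\pi)]\circ K_1$ is compact for every $t\in[0,1]$. The key is to pin down the fixed-point set: if $H_t(x)=x$, applying $\pi$ gives $\pi(x)=K_2(\pi(x))$, so $\pi(x)\in \alpha_2(S)$, say $\pi(x)=\alpha_2(\sigma)$ for some $\sigma\in S$. The diagram commutativity $\mu\circ\alpha_2=\alpha_1$ then forces $K_1(x)=\mu(\alpha_2(\sigma))=\alpha_1(\sigma)$, whence
$$x\;=\;(1-t)\,\alpha_1(\sigma)+t\,i(\alpha_2(\sigma)).$$
Conversely any such expression is readily checked to be a fixed point of $H_t$. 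Thus the union over $t\in[0,1]$ of the fixed-point sets of $H_t$ is the explicit ``ruled segment set'' $C:=\{(1-t)\alpha_1(\sigma)+t\,i(\alpha_2(\sigma)):t\in[0,1],\ \sigma\in S\}$, which at $t=0$ gives $\alpha_1(S)=\fp(K_1)$ and at $t=1$ gives $i(\alpha_2(S))=\fp(\tilde K_2)$.

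The concluding step is to produce an open bounded $W\subset X_1$ on which the homotopy is admissible and that simultaneously serves as an excision set for both $K_1$ on $U_1$ and $\tilde K_2$ on $V_r$. The common-core hypothesis is exactly what allows this: since $\alpha_1(S_0)=S_1$ and $\alpha_2(S_0)=S_2$, the ``relevant'' segments are those indexed by $\sigma\in S_0$, whose endpoints lie in $S_1\subset U_1$ and $i(S_2)\subset V_r$; the remaining fixed points satisfy $\alpha_1(\sigma)\notin \overline{U_1}$ or $\alpha_2(\sigma)\notin \overline{U_2}$. One would take $W$ to be a small tubular neighborhood of $\bigcup_{\sigma\in S_0}\{x_t(\sigma):t\in[0,1]\}$, then invoke excision to write $\dls(\id-K_1,U_1,0)=\dls(\id-K_1,W,0)$ and $\dls(\id-\tilde K_2,V_r,0)=\dls(\id-\tilde K_2,W,0)$, and finally homotopy invariance of the Leray--Schauder degree on $W$ to equate these two.

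The main obstacle I expect is the construction of $W$: one must verify that $C\cap\partial W=\emptyset$ while $W$ captures exactly the $S_0$-indexed segments and excludes all other components of $C$. This separation relies essentially on the conditions $\alpha_i(S)\cap \partial U_i=\emptyset$ (so the two fixed-point sets are bounded away from the boundaries of $U_1$ and $V_r$, respectively) and on a compactness argument for the compact fixed-point set of $H_t$, together with the continuity of $\alpha_1,\alpha_2,i$; everything else in the proof is a bookkeeping application of excision, homotopy invariance and Lemma~\ref{conjug}.
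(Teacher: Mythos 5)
Your first half coincides with the paper's own argument: the auxiliary operator $\tilde K_2=i\circ K_2\circ\pi$, the application of Lemma \ref{conjug} with $K=K_2$ (compact because $\mu$ is), the linear homotopy $H_t=(1-t)K_1+t\tilde K_2$, and the observation that any fixed point $x$ of $H_t$ satisfies $\pi(x)=K_2(\pi(x))$ and is therefore the convex combination $(1-t)\alpha_1(\sigma)+t\, i(\alpha_2(\sigma))$ for some $\sigma\in S$ --- all of this is correct and is exactly what the paper does. The gap is in the concluding step, which you yourself flag as ``the main obstacle'': the open set $W$ is never constructed, and as described (a ``small tubular neighborhood'' of the $S_0$-indexed segments) it is more delicate than the sketch suggests. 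To get $\partial W\cap\fp(H_t)=\emptyset$ for every $t$ you must separate the good ruled set from the bad one, i.e.\ show that the segments with $\alpha_2(\sigma)\notin U_2$ stay at positive distance from the closure of the good segments; this requires an argument (for instance: $S_2=\fp(K_2)\cap\overline{U_2}$ is compact, $\fp(K_2)\setminus U_2$ is closed and disjoint from it, and preimages under the bounded map $\pi$ of sets at positive distance remain at positive distance) that does not appear in the proposal, and the two excision steps you invoke depend on the same separation. Since this is precisely the point where the theorem is actually proved, the proposal as written is incomplete.

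The paper shows that no new set $W$ is needed: after enlarging $r$ so that, in addition, $U_1\subset B(0,r)$, the homotopy is already admissible on $V_r=\pi^{-1}(U_2)\cap B(0,r)$ itself. Indeed, if $x\in\overline{V_r}$ and $x=H_t(x)$, then $\pi(x)\in\overline{U_2}$ and $\pi(x)=K_2(\pi(x))$, so by the common-core hypothesis $\pi(x)\in\fp(K_2)\cap\overline{U_2}=S_2\subset U_2$ and hence $x\in\pi^{-1}(U_2)$; moreover $x$ is a convex combination of $K_1(x)=\mu(\pi(x))\in S_1\subset U_1\subset B(0,r)$ and $i(\pi(x))\in i(U_2)\subset B(0,r)$, so $\|x\|<r$. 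Thus every fixed point of $H_t$ in $\overline{V_r}$ lies in the open set $V_r$ and never on $\partial V_r$. A single excision, justified by $\fp(K_1)\cap V_r=\fp(K_1)\cap U_1=S_1$, then replaces $V_r$ by $U_1$. Substituting this for your $W$-construction closes the gap; the rest of your proposal stands as written.
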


\begin{proof}
Define $\tilde{K}:=i\circ K_2 \circ \pi$, then, by Lemma \ref{conjug},  there exists $r_0$ such that, for  $r>r_0$ and $V_r=\pi^{-1}(U_2)\cap B(0,r)$, 
  $$ \dls(\id_{X_1}-\tilde{K},{V}_r,0)=
  \dls(\id_{X_2}-K_2,{U_2},0).$$ 
  Fix $r>r_0$ such that $i(U_2)$ and $U_1$ are contained in $B(0,r)$ and consider the homotopy $H_\lambda:=\lambda K_1 + (1-\lambda) \tilde{K}$.
  
  \textbf{Claim}: The homotopy is admissible, that is: $\fp(H_\lambda)\cap \partial V_r=\emptyset$ for all $\lambda \in [0,1]$. 
  Indeed, let   $\lambda \in [0,1]$ and $x\in\overline{V_r}$ such that
\begin{equation}\label{hom}
 x  =\lambda \   K_1 ( x)+ (1-\lambda) \   \tilde{K} ( x).\end{equation} 
 Because $\pi({V}_r)={U_2}$, it follows that $\pi (x)\in \overline{U_2}$.  
 Next, apply $\pi$ on both sides of  ($\ref{hom}$) to obtain 

\[ 
\begin{array}{ll}
 \pi  (x)  &= \lambda \ (\pi \circ K_1) (x)+ (1-\lambda) \ (\pi \circ \tilde{K}) (x) \\
             &=\lambda \ (\pi \circ  \mu \circ \pi) (x)+ (1-\lambda) \ (\pi \circ  i \circ \pi \circ \mu \circ \pi) (x) \\
             &=\lambda \ K_2  (\pi (x))+ (1-\lambda) \  K_2  (\pi (x)) \\
             &= K_2 (\pi (x)).\ \
\end{array}
\]
Thus,  $\pi (x) \in \fp{}(K_2|_{\overline{U_2}})=S_2\subset U_2$, whence $x \in \pi^{-1}(U_2)$. Moreover, 
it is seen from 
   (\ref{hom}) that
$$\Vert x\Vert \leq \lambda \Vert  K_1 (x) \Vert+ (1-\lambda) \Vert  \tilde{K} (x) \Vert. $$
 Since $U_1$ and $U_2$ have common core, 
 $\pi(x)=S_2=\alpha_2(S_0)$, then
 $$K_1(x)=\mu\circ \pi (x)\in \alpha_1(S_0)=
 S_1\subset U_1\subset B(0,r).$$
 On the other hand, because $\pi(x)\in \fp(K_2)$ we obtain:
$$ \tilde{K}(x)=(i \circ K_2 \circ\pi)(x)=i(\pi(x)) \in i(U_2)\subset B(0,r).$$
 We conclude that $\|x\|<r$ and the claim follows.
 
Thus, using the homotopy invariance of the degree, we deduce:
$$\dls(\id_{X_1}-\tilde{K},V_r,0)=\dls(\id_{X_1}-K_1,V_r,0).$$
Finally, from the fact that $V_r\cap \alpha_1(S)=U_1 \cap \alpha_1(S)$ and the excision property it follows that
$$\dls(\id_{X_1}-K_1,V_r,0)=\dls(\id_{X_1}-K_1,U_1,0)
$$.\end{proof}

\section{The problem $L(x)=N(x)$}
\label{lxnx}

Many semilinear boundary value problems for differential equations can be expressed 
in the form
\begin{align}
L(x)&=N(x)\label{Eq dif pvi 1}
%\pik(x)&=x_0\label{Eq dif pvi 2}
\end{align}
as described by the following  diagram:

 \begin{figure}[ht]
    \begin{center}
\includegraphics[scale=.45]{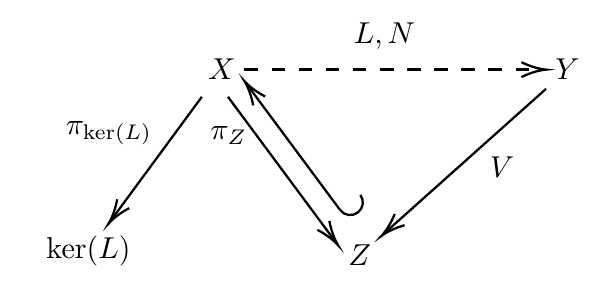}
\end{center}
\end{figure}

\newpage

Here, 
\begin{itemize}

\item $L:D\to Y$ is a linear  operator, where 
$D:=\textrm{dom}(L)\subset X$ is  dense and $L$ is onto the Banach space $Y$.
%$\textrm{Im}(L)= Y$ is a Banach space.  
\item $\ker(L)$ is closed and there exists a continuous projector  
$\pi_{\ker (L)}:X\to X$ onto $\ker(L)$. It follows that   
$X=\ker(L) \oplus Z$, where $Z=\ker(\pi_{\ker(L)})$ is closed.

%\textcolor{blue}{Esto quiere decir que $Z=\textrm{Im}(\pi_{\ker L})$ y $\pi_Z= \id_X - \pi_{\ker(L)}$? En ese caso, creo que es mejor decir primero que $\pi_{\ker (L)}$ y despu\'es definir $Z$ }  

\item $V:Y\to Z$ is linear and compact such that  
 {$L\circ V=\id_{Y}$ and $\textrm{Im}(V)= Z$}.
%\textcolor{blue}{Ojo, tal vez convenga aclarar que no es la t\'ipica descomposici\'on Lyapunov-Schmidt, porque  asumimos $\textrm{Im}(L)=Y$. }
\item $N$ is continuous.
\end{itemize}

%\textcolor{blue}
{
\begin{obs}  {In particular, due to the Hahn-Banach Theorem, a continuous projector $\pi_{\ker(L)}$ always exists when $\ker(L)$ is finite dimensional.} In some situations, it may occur that $V$ is not compact, although
all the results are still valid under the weaker assumption that $V\circ N$ is compact. This is analogous to what is called, in the context of the coincidence degree theory, the $L$-compactness of the operator $N$. 
\end{obs}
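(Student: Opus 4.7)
The remark combines two independent observations; I would address them in sequence.

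For the assertion about the existence of a continuous projector when $\ker(L)$ is finite dimensional, I would carry out the standard Hahn-Banach construction for complementing a finite dimensional subspace. Setting $n:=\dim\ker(L)$, fix a basis $e_1,\dots,e_n$ of $\ker(L)$ and consider the coordinate functionals $\phi_i\colon\sum_j c_j e_j\mapsto c_i$ on $\ker(L)$; these are bounded because their domain is finite dimensional. The Hahn-Banach theorem extends each $\phi_i$ to some $\Phi_i\in X^*$, and one then defines
$$\pi_{\ker(L)}(x):=\sum_{i=1}^n\Phi_i(x)\,e_i.$$
Linearity and continuity are immediate from the continuity of the $\Phi_i$, the image lies in $\ker(L)$ by construction, and the identity $\pi_{\ker(L)}(e_j)=e_j$ gives $\pi_{\ker(L)}^2=\pi_{\ker(L)}$ with range exactly $\ker(L)$. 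This produces the required continuous projector.

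For the second assertion, I would revisit the proofs of Lemma \ref{conjug}, Proposition \ref{conjugacion} and Theorem \ref{teorema principal}, and check that compactness of $V$ is never invoked in isolation but only through the combination $V\circ N$. The guiding observation is that the natural fixed-point reformulation of $L(x)=N(x)$ reads
$$x=\pi_{\ker(L)}(x)+(V\circ N)(x),$$
since $V$ is a continuous right inverse of $L$ with range complementary to $\ker(L)$; hence the nonlinear piece of every relevant fixed-point operator factors through $V\circ N$. Under the first part of the remark, the projector $\pi_{\ker(L)}$ has finite dimensional range and is therefore automatically compact, so the full fixed-point map is a compact perturbation of the identity as soon as $V\circ N$ is compact. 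This is precisely the notion of $L$-compactness of $N$ used in Gaines-Mawhin coincidence degree theory and makes the Leray-Schauder degree, together with its homotopy and excision properties, available exactly as in the original arguments.

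The step I expect to be the main obstacle is the bookkeeping for the second claim: one must inspect every occurrence of $V$ in the forthcoming Section \ref{lxnx} and confirm that it is always composed with $N$, never used on its own. Once this verification is complete, the proofs go through verbatim with ``$V$ compact'' replaced by ``$V\circ N$ compact'', and no additional argument is required, so no new theorem needs to be proved beyond the hypothesis weakening recorded in the remark.
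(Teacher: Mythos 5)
Your argument is correct and is exactly the standard justification the authors are implicitly invoking: the paper states this remark without proof, and your Hahn--Banach extension of the coordinate functionals is the canonical construction of the continuous projector, while your observation that $V$ only ever enters the fixed-point operators through the composition $V\circ N$ (the remaining pieces $\pi_{\ker(L)}$, $\delta$, and the maps factoring through $\ker(L)$ being compact by finite-dimensionality of their ranges) is precisely why the compactness hypothesis can be weakened as claimed. No gap.
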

}

\begin{proposition}
In the previous setting, let $ K_0:=\pik + V\circ N$.  Then the solutions of  (\ref{Eq dif pvi 1}) are the fixed points of $K_0$. In other words,  (\ref{Eq dif pvi 1}) is equivalent to the equation \begin{equation}\label{eq int}
K_0(x)=x.
\end{equation}\end{proposition}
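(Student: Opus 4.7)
The plan is to prove both inclusions by exploiting the direct sum decomposition $X=\ker(L)\oplus Z$ and the fact that $V$ is a right inverse of $L$ with image exactly $Z$. The key preliminary observation is that the restriction $L|_{Z\cap D}\colon Z\cap D\to Y$ is a bijection whose inverse is precisely $V$: injectivity follows from $L\circ V=\id_Y$, and surjectivity onto $Z$ is the hypothesis $\mathrm{Im}(V)=Z$. In particular, $V(y)\in D$ for every $y\in Y$, a fact I will use freely.

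For the forward direction, suppose $L(x)=N(x)$, so $x\in D$. Decompose $x=\pik(x)+z$ with $z\in Z\cap D$. Applying $L$ gives $L(z)=L(x)=N(x)$, hence $z=V(L(z))=V(N(x))$, which yields
\[
x=\pik(x)+V(N(x))=K_0(x).
\]

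For the converse, suppose $K_0(x)=x$, i.e.\ $x=\pik(x)+V(N(x))$. Since $V(N(x))\in\mathrm{Im}(V)=Z\subset D$ and $\pik(x)\in\ker(L)\subset D$, we obtain $x\in D$. Applying $L$ and using $L\circ\pik=0$ together with $L\circ V=\id_Y$, we get
\[
L(x)=L(\pik(x))+L(V(N(x)))=0+N(x)=N(x),
\]
so $x$ solves \eqref{Eq dif pvi 1}.

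The only delicate point is keeping track of the domain $D$: one must verify that $V(N(x))$ really lies in $D$ (so that $L$ can be applied), which is guaranteed by the well-definedness of $L\circ V=\id_Y$, and that the component $z$ in the forward direction lies in $Z\cap D$, which follows from $x\in D$ and $\pik(x)\in\ker(L)\subset D$. Once these are in place the argument is a direct computation, so no substantial obstacle is expected.
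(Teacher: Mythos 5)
Your proof is correct and follows essentially the same route as the paper's: decompose $x=\pik(x)+z$ with $z\in Z$, apply $L$, and use that $V$ inverts $L$ on $Z$. The only difference is that you make explicit the domain bookkeeping ($Z\subset D$, $V(N(x))\in D$) that the paper leaves implicit, which is a harmless refinement rather than a different argument.
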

\begin{proof}
Let $x\in \fp(K_0)$, then $x=\pik(x)+(V\circ N)(x)$ and, applying $L$, we deduce that $L(x)=N(x)$. Conversely, 
let $x$ be a solution of (\ref{Eq dif pvi 1}) and set  $z:= x-\pik(x)\in Z$. It follows that $L(z)=L(x)=N(x)$; thus,  $z=(V\circ N) (x)$, that is,  $x=\pik(x)+(V\circ N)(x)$.
\end{proof}

Finally, we shall need a condition regarding the uniqueness of the solutions. This is the role of the following assumption, that can be interpreted as an  
abstract  Gronwall-like inequality. Specifically, we shall say that the previous  diagram satisfies 
the condition $(J)$ if, for all $\eta\ge 0$, 
\begin{equation}\label{(J)} \tag{J}
x-y=\eta V(N(x)-N(y))\quad \Longrightarrow\quad x=y.
\end{equation}

%\textcolor{blue}{ Para lo que sigue, tal vez dar alguna intuici\'on de lo que significa, invocando  a Gronwall? No lo pondr\'ia como definici\'on.}
%Diremos que es \textbf{muy bueno} si $\mu$ es compacta.
%\end{definition}

%\begin{definition}
%Diremos que $\mathcal{E}_0$ cumple la propiedad $(J)$ si para todo $a\ge 0$ vale la siguiente implicación
%$$\pik(x)=\pik(y) \quad\wedge\qquad x-y=aV(N(x)-N(y))\qquad \Longrightarrow\qquad x=y$$
%\textcolor{blue}{En realidad la condici\'on $\pik(x)=\pik(y)$ no hace falta, no? Digo porque $V$ cae en $Z$ }
%\end{definition} 

\subsection*{Special solutions}

In this section, we shall analyze the problem of finding solutions of  (\ref{Eq dif pvi 1}) that satisfy a certain linear condition, e. g. periodic solutions of an ODE or a DDE, Dirichlet or Neumann boundary conditions for ODEs or PDEs, etc.  

Let us consider the problem
\begin{align}
L(x) &= N(x) \label{sistema especial 1}\\ 
\delta (x) &=0\label{sistema especial 2}
\end{align}
where $\delta: X \to X$ is a bounded linear operator such that $\textrm{Im}(\delta)\subset \ker(L)$. 
As an example, we may consider the Dirichlet problem 
\begin{align}
x''(t) = & f(x(t),x'(t)) \label{Dirichlet homogeneo 2 orden 1}\\
x(0)= & \, x(1)=0
\label{Dirichlet homogeneo 2 orden 2},
\end{align}
with $X=C^1[0,1]$. Observe that $\ker(L)=\lbrace b+ta:  a, b \in \R^n \rbrace\cong \R^{2n}$. 
Thus, if we define
$\pik (x):=x(0)+tx'(0)\cong (x(0),x'(0))$ and  $\delta(x)=x(0)+tx(1)$, 
then 
(\ref{Dirichlet homogeneo 2 orden 1})-(\ref{Dirichlet homogeneo 2 orden 2}) is written in the form (\ref{sistema especial 1})-(\ref{sistema especial 2}) .  

%\begin{definition}
%Llamamos diagrama $\mathcal{E}$ asociado a (\ref{sistema especial 1}-\ref{sistema especial 2}) al diagrama  $\mathcal{E}_0$ asociado a \ref{Eq dif} extendido con $\delta$. Diremos que $\mathcal{E}$ es bueno o muy bueno, si $\mathcal{E}_0$ lo es y que cumple $(J)$ si $\mathcal{E}$ cumple $(J)$.
%\end{definition}

\begin{proposition}\label{proposicion 2}
In the previous situation, %Consider as before a diagram  associated to  
%(\ref{sistema especial 1})-(\ref{sistema especial 2}).
define $\pi:=\pik+\delta$ and $K:=\pi + V\circ N$, then  (\ref{sistema especial 1})-(\ref{sistema especial 2}) is equivalent to the fixed point problem $K(x)=x$.
\end{proposition}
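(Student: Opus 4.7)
The plan is to prove the two implications of the equivalence directly by using the direct sum decomposition $X=\ker(L)\oplus Z$ together with the assumption $\mathrm{Im}(\delta)\subset\ker(L)$. The previous proposition already handled the case $\delta=0$ by splitting a solution of $L(x)=N(x)$ along this decomposition, and the present statement is essentially the same argument with $\delta$ tracked on the kernel side.

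For the forward direction, I would start from a pair $(x)$ satisfying $L(x)=N(x)$ and $\delta(x)=0$ and decompose $x=\pik(x)+z$ with $z\in Z$. Applying $L$ gives $L(z)=L(x)=N(x)$, so by the same argument used in the proof that $\fp(K_0)$ coincides with the solution set of \eqref{Eq dif pvi 1} (namely, $V\circ L$ acts as the identity on $Z$, since $L|_Z:Z\to Y$ is a bijection with inverse $V$), we obtain $z=V(N(x))$. Adding the vanishing term $\delta(x)$ yields $x=\pik(x)+\delta(x)+V(N(x))=K(x)$.

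For the converse, assume $K(x)=x$, i.e.\ $x=\pik(x)+\delta(x)+(V\circ N)(x)$. First I would apply $L$ to both sides: since $L\circ\pik=0$, $L\circ\delta=0$ (because $\mathrm{Im}(\delta)\subset\ker(L)$), and $L\circ V=\id_Y$, this immediately gives $L(x)=N(x)$. To recover the boundary condition $\delta(x)=0$, I would then apply $\pik$ to both sides and exploit the identities $\pik\circ\pik=\pik$, $\pik\circ\delta=\delta$ (again because $\mathrm{Im}(\delta)\subset\ker(L)$), and $\pik\circ V=0$ (because $\mathrm{Im}(V)=Z=\ker(\pik)$). The result is
\[
\pik(x)=\pik(x)+\delta(x),
\]
from which $\delta(x)=0$ follows at once.

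I do not expect a serious obstacle: the argument is purely algebraic manipulation with projectors and the direct sum decomposition, and all the ingredients (in particular that $V$ inverts $L$ on $Z$) were already set up in the assumptions on the diagram. The only point that requires minor care is noticing that the hypothesis $\mathrm{Im}(\delta)\subset\ker(L)$ is used twice, once to make $L\circ\delta$ vanish and once to make $\pik\circ\delta$ equal $\delta$; these two uses are exactly what separates the $\ker(L)$-component from the $Z$-component in the fixed-point identity.
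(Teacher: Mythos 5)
Your proof is correct and follows essentially the same route as the paper: apply $L$ to the fixed-point identity to recover $L(x)=N(x)$, apply $\pik$ to recover $\delta(x)=0$, and for the converse add the vanishing term $\delta(x)$ to the identity $x=\pik(x)+(V\circ N)(x)$ already established for $K_0$. The extra identities you spell out ($\pik\circ\delta=\delta$, $\pik\circ V=0$, $L\circ\delta=0$) are exactly the ones the paper uses implicitly.
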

\begin{proof}
Let $x\in\fp(K)$, then  $x=\pi(x)+(V\circ N)(x)$. Because $\textrm{Im}(\pi)\subset\ker(L)$, applying $L$ we obtain
$L(x)=N(x)$. On the other hand, if we apply   $\pik$ instead, we deduce that   $\pik(x)=\pik(x)+\delta(x)$ which, in turn, implies $\delta(x)=0$.
Conversely, if $x$ solves (\ref{sistema especial 1}) -(\ref{sistema especial 2}), then  $x=\pik (x)+(V\circ N)(x)$. Since $\delta(x)=0$, 
this obviously implies 
$x=\pik(x)+\delta(x)+(V\circ N) (x)=K(x)$.
\end{proof} 
%\subsection{Caso muy bueno}

\begin{lemma}\label{lemma diagrama base}
Consider as before a diagram 
 associated to (\ref{sistema especial 1})-(\ref{sistema especial 2}) %\textcolor{blue}
 {satisfying (J)}
 and 
 assume there  exists $\mu: \ker(L)\to X$ continuous such that
\begin{enumerate}
\item $\mu(\ker(L))=\{x\in X: x\, \hbox{is a solution of (\ref{sistema especial 1})}\}$.
%\textcolor{blue}{Ver si hace falta ponerle nombre! Se lo saqu\'e para no confundir con el otro $S_0$.}
\item $\pik\circ\mu=\id_{\ker(L)}$.
\end{enumerate}
Let $\mathcal{B}$ be the  diagram given by 
 \begin{figure}[h]
    \begin{center}
\includegraphics[scale=.45]{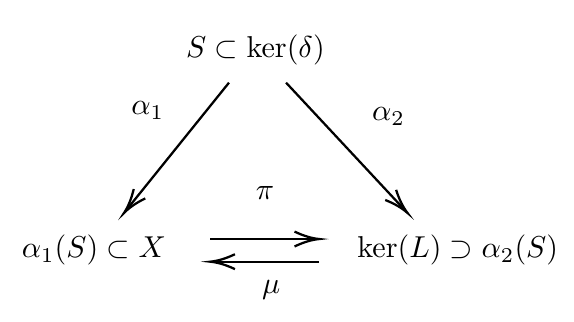}
\end{center}
\end{figure}

\noindent where: \begin{itemize}
    \item $S=\lbrace x\in X: x \text{ is a solution of } \hbox{(\ref{sistema especial 1})-(\ref{sistema especial 2})}\rbrace$.
    \item $\pi=\delta +\pik$
    \item $\alpha_1$ denotes the inclusion of $\ker(\delta)$ in $X$ 
    \item $\alpha_2=\pik|_{\ker(\delta)}$.
\end{itemize}
Then $\mathcal{B}$ 
 commutes when restricted to 
 $S$, $\alpha_1(S)$ and $\alpha_2(S)$.
 
\end{lemma}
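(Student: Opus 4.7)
The plan is to verify directly the two commutativity relations required by a diagram $\mathcal{B}$ in the framework of Section~\ref{abstr-sec}. Fix an arbitrary $x\in S$; by definition this means $L(x)=N(x)$ and $\delta(x)=0$, so $x\in\ker(\delta)$ (hence $\alpha_2(x)$ makes sense) and $x$ belongs to the solution set of (\ref{sistema especial 1}).

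For the first relation $\pi\circ\alpha_1=\alpha_2$ on $S$, since $\alpha_1$ is simply the inclusion $\ker(\delta)\hookrightarrow X$, I would unwind $\pi=\delta+\pi_{\ker(L)}$ to get $(\pi\circ\alpha_1)(x)=\delta(x)+\pi_{\ker(L)}(x)$. The boundary condition $\delta(x)=0$ eliminates the first summand, leaving $\pi_{\ker(L)}(x)=\pi_{\ker(L)}|_{\ker(\delta)}(x)=\alpha_2(x)$. In passing, this confirms that on $S$ the image of $\pi$ lands in $\ker(L)$, the codomain required by the diagram.

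For the second relation $\mu\circ\alpha_2=\alpha_1$ on $S$, the idea is to use both stated properties of $\mu$. Since $x$ solves (\ref{sistema especial 1}), the hypothesis $\mu(\ker(L))=\{x\in X:x\text{ solves (\ref{sistema especial 1})}\}$ yields some $v\in\ker(L)$ with $x=\mu(v)$. Applying $\pi_{\ker(L)}$ and using $\pi_{\ker(L)}\circ\mu=\id_{\ker(L)}$ gives $v=\pi_{\ker(L)}(x)=\alpha_2(x)$. Therefore $(\mu\circ\alpha_2)(x)=\mu(v)=x=\alpha_1(x)$, as desired.

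I do not anticipate any genuine obstacle: the content of the lemma is the bookkeeping check that projecting a solution onto $\ker(L)$ by $\pi_{\ker(L)}$ and lifting it back by $\mu$ recovers the original element, a fact already encoded in the hypotheses on $\mu$. Condition (J) is not strictly required for this step, but it provides an alternative justification of the second relation: both $x$ and $\mu(\pi_{\ker(L)}(x))$ solve (\ref{sistema especial 1}) and share the same $\ker(L)$-projection, so their difference lies in $Z$ and is $V$-inverted to itself, forcing equality via (J) with $\eta=1$.
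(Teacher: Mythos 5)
Your proof is correct. The first relation $\pi\circ\alpha_1=\alpha_2$ on $S$ is handled exactly as in the paper: $\delta(x)=0$ kills the $\delta$ summand of $\pi$. For the second relation the paper takes a different route from your primary argument: setting $k:=\pik(x)$, it observes that $\mu(k)$ and $x$ are both solutions of (\ref{sistema especial 1}) with the same $\ker(L)$-projection, writes $x-\mu(k)=V(N(x)-N(\mu(k)))$, and invokes condition $(J)$ with $\eta=1$ to conclude $x=\mu(k)$ --- precisely the ``alternative justification'' you sketch in your closing paragraph. Your main argument instead exploits the hypothesis that $\mu(\ker(L))$ \emph{equals} the solution set of (\ref{sistema especial 1}): any $x\in S$ is $\mu(v)$ for some $v\in\ker(L)$, and $\pik\circ\mu=\id_{\ker(L)}$ forces $v=\pik(x)=\alpha_2(x)$, whence $\mu(\alpha_2(x))=x$ with no appeal to $(J)$ at all. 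This is a legitimate simplification given the hypotheses as stated (surjectivity of $\mu$ onto the solution set is assumed, not derived); what the paper's $(J)$-based argument buys is robustness --- it only needs $\mu(k)$ to be \emph{some} solution with projection $k$, so it survives if the first hypothesis is weakened to an inclusion, which is closer to how $\mu$ is actually produced in the applications. Both arguments are sound.
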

\begin{proof} 
Let $x\in S$, then    
$$(\pi \circ \alpha_1)(x)=\pi(x)=\delta(x)+\pik(x)=\pik(x)=\alpha_2(x).$$ On the other hand, calling $k:=\pik(x)$,  it is seen that $(\mu\circ \alpha_2)(x)=\mu(k)$. It follows from the definition 
  $L(\mu(k))=N(\mu(k))$ and  
$\pik(\mu(k))=k=\pik(x)$. %\textcolor{blue}
{Because $x\in S$, it follows that 
$x-\mu(k)= V(N(x)- N(\mu(k)))$ and we deduce from $(J)$ that $x=\mu(k)=(\mu\circ \alpha_2)(x)$. }
\end{proof}
%Diremos que $\mathcal{B}$ es un diagrama de representaciones de soluciones asociado a (\ref{sistema especial 1})-(\ref{sistema especial 2}).

\begin{lemma}\label{diagrama de Krasnoselski}
In the situation of the previous Lemma, %\textcolor{blue}
{assume that the mapping    $\pi=\delta+\pik$ is onto.} 
Furthermore, assume that $U_1\subset X$ and $U_2\subset\ker(L)$ are bounded 
open sets with common core 
with respect to $\mathcal{B}$.  
Consider the diagram $\mathcal{K}$ given by
  \begin{figure}[h]
    \begin{center}
\includegraphics[scale=.45]{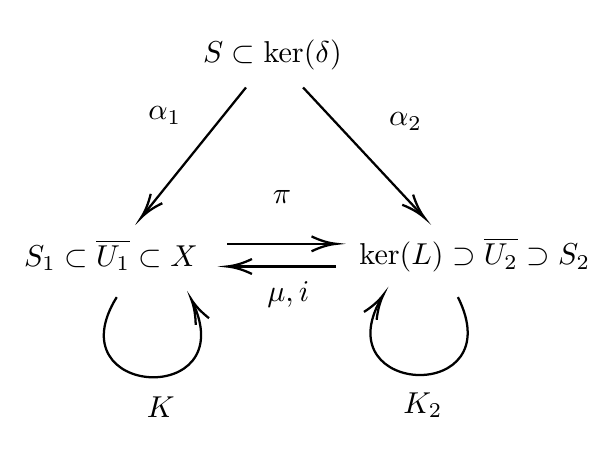}
\end{center}
\end{figure}

with $S$, $\pi$, $\alpha_1$ and $\alpha_2$ as before
and
\begin{itemize}
\item $i$ is a right inverse of $\pi$
\item $K=\pi + V\circ N$. 
 \item $K_2=\pi\circ \mu$.
\end{itemize}
Then $\mathcal K$ is  a Krasnoselskii diagram. 
\end{lemma}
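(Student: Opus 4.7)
The plan is to verify the three defining properties of a Krasnoselskii diagram in turn. Common core of $U_1$ and $U_2$ is given by hypothesis. A continuous right inverse $i$ of $\pi$ is built into the description of $\mathcal{K}$ and is available because $\pi=\delta+\pik$ is assumed onto (in the intended applications $\ker(L)$ is finite dimensional, so a bounded linear right inverse always exists). The substantive task is therefore to show that $K=\pi+V\circ N$ and $K_2=\pi\circ\mu$ are fixed point operators, i.e., $\fp(K)=\alpha_1(S)$ and $\fp(K_2)=\alpha_2(S)$.

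For the first identity, note that $\alpha_1$ is the inclusion $\ker(\delta)\hookrightarrow X$ and every element of $S$ automatically satisfies $\delta(x)=0$, so $\alpha_1(S)=S$ as a subset of $X$. Proposition~\ref{proposicion 2} states precisely that $\fp(K)=S$ for this $K$, so the identification is immediate.

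For $\fp(K_2)=\alpha_2(S)$, I would prove the two inclusions separately. Take first $k=\pik(x)$ with $x\in S$. Both $x$ and $\mu(k)$ solve (\ref{sistema especial 1}) and share the same $\pik$-component $k$, so their difference lies in $Z=\ker(\pik)=\textrm{Im}(V)$, while $L(x-\mu(k))=N(x)-N(\mu(k))$; applying $V$ yields $x-\mu(k)=V(N(x)-N(\mu(k)))$, and condition $(J)$ with $\eta=1$ forces $\mu(k)=x$. Therefore $K_2(k)=\pi(\mu(k))=\pi(x)=\delta(x)+\pik(x)=0+k=k$. Conversely, if $\pi(\mu(k))=k$, then, since $\pik\circ\mu=\id_{\ker(L)}$, we obtain $\delta(\mu(k))=0$; combined with the fact that $\mu(k)$ already solves (\ref{sistema especial 1}), this gives $\mu(k)\in S$, whence $k=\pik(\mu(k))=\alpha_2(\mu(k))\in\alpha_2(S)$.

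The main obstacle is the forward inclusion $\alpha_2(S)\subset\fp(K_2)$, which is the only place where condition $(J)$ is actually used: without the uniqueness it provides, one cannot identify $\mu(\pik(x))$ with $x$ itself and the computation $K_2(\pik(x))=\pik(x)$ breaks down (in particular the $\delta$ contribution would not vanish). Every other verification reduces to unpacking the definitions of $\pi$, $\alpha_i$ and $K$ and invoking the already-established Proposition~\ref{proposicion 2}.
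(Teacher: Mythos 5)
Your proof is correct and follows essentially the same route as the paper: common core and the right inverse are immediate, $\fp(K)=\alpha_1(S)$ comes from Proposition~\ref{proposicion 2}, and the reverse inclusion $\fp(K_2)\subset\alpha_2(S)$ is handled by exactly the paper's computation $k=\pi(\mu(k))=k+\delta(\mu(k))$. The only difference is that you re-derive the inclusion $\alpha_2(S)\subset\fp(K_2)$ from condition $(J)$ directly, whereas the paper simply cites the commutativity $\mu\circ\alpha_2=\alpha_1$ on $S$ already established (via $(J)$) in Lemma~\ref{lemma diagrama base}, so the content is the same.
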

\begin{proof}
From the previous lemma, we only have to verify that $K$ and $K_2$ are fixed point operators. 
This is clear for $K$, from Proposition \ref{proposicion 2} and the fact that  $\alpha_1$ 
is simply the inclusion. 
{Since we already know that $ \alpha_2(S)\subset \fp(K_2)$}, 
it only remains to prove that  $\fp(K_2)\subset \alpha_2(S)$.
To this end, let us  consider  
$k\in \fp(K_2)$, that is, $k=(\pi\circ \mu) (k)$.
Let $x=\mu(k)$, then $x$ is a solution of  (\ref{sistema especial 1}) and satisfies 
  $\alpha_2(x)=\pik(x)=k$. Moreover, 
  $$k=\pi(\mu(k))=
  (\pik+\delta)(\mu(k))=k+\delta(\mu(k))=k+\delta(x).$$ We deduce that $\delta(x)=0$, whence 
  $x\in S$ and $k=\alpha_2(x)\in \alpha_2(S)$.
%  \noindent Conversely,  if $k\in \alpha_2(S)$ then  $k=\alpha_2(x)=\pik(x)$ for some   $x\in S$. As in the proof of the previous lemma,  It follows that  $\mu(k)=x$; thus    $$K_2(k)=(\pi\circ\mu)(k)=(\delta+\pik)(\mu(k))=\delta(x)+\pik(x)=k$$ and so completes the proof.
\end{proof}

\begin{lemma}\label{kras-diag-1}
In the situation of the previous lemma, consider the diagram $\mathcal K_1$ obtained from $\mathcal K$ by replacing the operator $K$
by $K_1:=\mu\circ\pi$. 
Then $\mathcal K_1$ is  a Krasnoselskii diagram. 
\end{lemma}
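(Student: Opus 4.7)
The plan is to unpack the definition of a Krasnoselskii diagram and observe that almost all of its requirements transfer for free from Lemma \ref{diagrama de Krasnoselski}. The maps $\alpha_1,\alpha_2,\pi,\mu$, the continuous right inverse $i\in L(X_2,X_1)$ of $\pi$, and the open sets $U_1,U_2$ with common core are identical in $\mathcal{K}_1$ and $\mathcal{K}$; likewise, the operator $K_2=\pi\circ\mu$ is unchanged and so is still a fixed point operator. The only new thing to verify is that $K_1=\mu\circ\pi$ is itself a fixed point operator, that is, $\fp(\mu\circ\pi)=\alpha_1(S)=S$, where the last equality uses that $\alpha_1$ is the inclusion of $\ker(\delta)$ and $S\subset\ker(\delta)$.

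For the inclusion $S\subset\fp(K_1)$, I would invoke the Remark placed right after the definition of a Krasnoselskii diagram, which already gives $\alpha_1(S)\subset\fp(\mu\circ\pi)$ whenever the diagram commutes on $S,\alpha_1(S),\alpha_2(S)$; that commutativity is exactly what Lemma \ref{lemma diagrama base} supplies, and where the hypothesis (J) is consumed. A direct check is equally easy: if $x\in S$ then $\delta(x)=0$, so $\pi(x)=\pik(x)$, and Lemma \ref{lemma diagrama base} yields $\mu(\pik(x))=x$.

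The slightly more substantial step, and the only one that really exploits the explicit form $K_1=\mu\circ\pi$, is the reverse inclusion $\fp(K_1)\subset S$. I would take $x\in X$ with $x=\mu(\pi(x))$, set $k:=\pi(x)\in\ker(L)$, and note that $x=\mu(k)$. Property 1 of $\mu$ then immediately yields that $x$ solves (\ref{sistema especial 1}). Applying $\pik$ to $x=\mu(k)$ and using property 2, namely $\pik\circ\mu=\id_{\ker(L)}$, gives $\pik(x)=k$; comparing with $k=\pi(x)=\pik(x)+\delta(x)$ forces $\delta(x)=0$, so $x\in S$.

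I do not expect any real obstacle here: the argument is essentially bookkeeping, being careful about which side of each identity lives in $X$ versus $\ker(L)$, and about which defining property of $\mu$, $\pi$ or $\pik$ to invoke where. Notably, condition (J) plays no role in the inclusion $\fp(K_1)\subset S$; it enters only indirectly, through Lemma \ref{lemma diagrama base}, to secure $S\subset\fp(K_1)$.
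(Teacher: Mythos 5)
Your proposal is correct and follows essentially the same route as the paper: the inclusion $\alpha_1(S)\subset\fp(K_1)$ is delegated to the general remark (via the commutativity supplied by Lemma \ref{lemma diagrama base}), and the reverse inclusion is obtained exactly as in the paper's proof, by applying $\pik$ to $x=\mu(\pik(x)+\delta(x))$ and using $\pik\circ\mu=\id_{\ker(L)}$ to force $\delta(x)=0$, while $x=\mu(\pi(x))$ shows $x$ solves (\ref{sistema especial 1}). Your closing observation about where condition (J) enters is also accurate.
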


\begin{proof}
Let $x\in \fp(K_1)$, that is $x=(\mu \circ \pi)(x)=\mu(\pik(x)+\delta(x))$. Applying $\pik$ at both sides, we obtain  $\pik(x)=\pik(x)+\delta(x)$ and consequently  $\delta(x)=0$. 
Moreover, since $x=\mu(\pi(x))$, 
we deduce that $x$ is a solution of (\ref{sistema especial 1}) and hence $x\in S=\alpha_1(S)$. {Thus, the proof is complete, because we already know that $\alpha_1(S)\subset \fp(K_1)$}. 
%Conversely, if  $x\in S$ then 
%we already know that $k=\alpha_2(x)$ satisfies $k=(\pi\circ\mu)(k)$ and $x=\mu(k)$. This implies
%$$x=\mu(k)= (\mu\circ\pi\circ \mu)(k)= (\mu\circ\pi)(x).  
%$$
\end{proof}

We are now in condition of establishing the main result of this section.

\begin{theorem}\label{teorema Lx=Nx}
In the situation of the previous lemmas, assume that $\mu$ is compact.  
Then
$$\dls(I-K,U_1,0)=\dls(I-K_2,U_2,0).$$
\end{theorem}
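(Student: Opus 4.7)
My plan is to combine the main abstract theorem (Theorem \ref{teorema principal}) with a linear homotopy that relates the operator $K = \pi + V\circ N$ from $\mathcal{K}$ to the operator $K_1 = \mu\circ\pi$ from $\mathcal{K}_1$. Since Lemma \ref{kras-diag-1} tells us that $\mathcal{K}_1$ is a Krasnoselskii diagram with operators precisely of the form $\mu\circ\pi$ and $\pi\circ\mu$, Theorem \ref{teorema principal} immediately gives
$$\dls(I-K_1,U_1,0)=\dls(I-K_2,U_2,0).$$
Thus it suffices to show $\dls(I-K,U_1,0)=\dls(I-K_1,U_1,0)$, and for this I would use the linear homotopy $H_\lambda := \lambda K + (1-\lambda) K_1$.

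The heart of the matter is verifying admissibility: if $x\in\overline{U_1}$ and $x = H_\lambda(x)$ for some $\lambda\in[0,1]$, then $x\notin\partial U_1$. My strategy is first to apply $\pik$ to the fixed-point equation
$$x = \lambda(\pi(x)+V(N(x))) + (1-\lambda)\mu(\pi(x)).$$
Using $\pik(V(N(x)))=0$ (because $\mathrm{Im}(V)=Z=\ker(\pik)$), $\pik(\pi(x))=\pi(x)$ (since $\mathrm{Im}(\pi)\subset\ker(L)$ and $\pik|_{\ker(L)}=\mathrm{id}$), and $\pik\circ\mu=\id_{\ker(L)}$, I obtain $\pik(x)=\pi(x)$, which forces $\delta(x)=0$. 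Setting $k:=\pik(x)=\pi(x)$, I then substitute back and use the fact, derived from the fixed-point characterization of solutions ($\mu(k) = \pik(\mu(k)) + V(N(\mu(k))) = k + V(N(\mu(k)))$), to rewrite the equation as
$$x - \mu(k) = \lambda\, V\!\bigl(N(x)-N(\mu(k))\bigr).$$
Condition $(J)$ with $\eta=\lambda\ge0$ then yields $x=\mu(k)$, so $x$ is a solution of (\ref{sistema especial 1})--(\ref{sistema especial 2}), i.e.\ $x\in S=\alpha_1(S)$. The common-core hypothesis then guarantees $x\notin\partial U_1$.

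The main obstacle is the admissibility verification above, which hinges on two non-obvious identifications: peeling off the $\ker(L)$ and $Z$ components via $\pik$ to force $\delta(x)=0$, and rearranging the convex combination so that condition $(J)$ applies. Once admissibility is established, homotopy invariance of $\dls$ gives $\dls(I-K,U_1,0)=\dls(I-K_1,U_1,0)$, and chaining with the equality from Theorem \ref{teorema principal} completes the proof. Compactness of $K$ (needed for $\dls$ to be defined) follows because $V\circ N$ is compact by hypothesis and $\pi=\pik+\delta$ has image in $\ker(L)$, which in the relevant applications is finite-dimensional; compactness of $K_1=\mu\circ\pi$ follows from the assumed compactness of $\mu$.
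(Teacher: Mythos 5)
Your proposal is correct and follows essentially the same route as the paper: reduce to $\dls(I-K_1,U_1,0)=\dls(I-K_2,U_2,0)$ via Lemma \ref{kras-diag-1} and Theorem \ref{teorema principal}, then connect $K$ to $K_1$ by the linear homotopy $H_\lambda=\lambda K+(1-\lambda)K_1$, whose admissibility is obtained by applying $\pik$ to force $\delta(x)=0$ and then invoking condition $(J)$. The only (immaterial) difference is that you derive $x-\mu(k)=\lambda V(N(x)-N(\mu(k)))$ by substituting the fixed-point identity for $\mu(k)$ directly, whereas the paper applies $L$ to both sides and then inverts with $V$.
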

 \begin{proof}
From the previous lemma and Theorem \ref{teorema principal}, we know that $$\dls(I-K_1,U_1,0)=\dls(I-K_2,U_2,0). $$
Thus, it suffices to verify that $$\dls(I-K_1,U_1,0)=\dls(I-K,U_1,0).$$
To this end, let us define $H_\lambda:= \lambda K + (1-\lambda)K_1$ over the set $\overline{U_1}$ and suppose that   $x \in \fp(H_\lambda)$, that is
\begin{equation}
x=\lambda \left[\pi(x)+V\circ N (x)\right] + (1-\lambda)\left( \mu \circ \pi\right) (x).
\end{equation}
On the one hand, applying $\pik$ we obtain:
\begin{equation}
\pik(x)=\lambda \pi(x)  + (1-\lambda) \pi (x) =\pi(x)\implies\delta(x)=0.
\end{equation}
On the other hand, applying $L$ instead, we obtain:
\begin{equation}\label{cucu}
L(x)=\lambda   N (x)  + (1-\lambda)  (L \circ \mu \circ \pi) (x) 
\end{equation}
Let $y=(\mu \circ \pi)(x)$, then it follows from the definition of $\mu$ that  $L(y)=N(y)$ and $\pik(y)=\pi(x)$; moreover, because $\delta(x)=0$, it follows that 
$\pik(y)=\pik(x)$. 
Thus, writing (\ref{cucu}) as $L(x)=\lambda N(x) + (1-\lambda)L(y)$, we deduce
the equality 
\begin{equation}
L(x)-L(y)=\lambda \left( N(x)-N(y)\right),
\end{equation}
that is
\begin{equation}
x-y=\lambda V(N(x)-N(y))
\end{equation}
and, from $(J)$, we conclude that 
$x=y$ and $L(x)=N(x)$. 
This implies that  $x\in S=\alpha_1(S)$ and 
hence  $x\notin\partial U_1$. Thus, the proof follows from the homotopy invariance of the Leray-Schauder degree. 
\end{proof}

% Veamos ahora otros operadores de punto fijo que tienen el mismo grado que los anteriores
% \begin{theorem}
% Sea $\tilde{K}=\omega+V\circ N$ donde $Im(\omega)\subseteq ker(L)$. Supongamos que $\tilde{K}$ es de punto fijo para $\mathcal{B}$. Entonces 
% $$\dls(I-K,U_1)=\db(I-\tilde{K},U_1).$$
% \end{theorem}
% \begin{proof}

% Como $K$ y $\tilde{K}$  son de punto fijo para $\mathcal{B}$, entonces dado $x\in \alpha_1(S)$ tenemos que
% \\
% \begin{array}{lcr}
% x=\omega(x)+(V\circ N) (x) & \text{y} &  x=\pi(x)+(V\circ N) (x).   
% \end{array}
% \\
% Tomando $\delta$ lado a lado,\\
% \begin{array}{lcr}
% 0=\delta( \omega(x))+\delta((V\circ N) (x)) & \text{y} &  0=\delta(\pi(x))+\delta((V\circ N) (x)).   
% \end{array}\\

% Consideremos la homotopía $H_\lambda=\lambda K+ (1-\lambda)K_3$. Sea $x\in \fp_{\overline{U_1}}(H_\lambda)$. Queremos ver que $x\in \alpha_1(S)$. Sabemos que
% $$
% x=\lambda\omega(x)+(1-\lambda)\pi(x)+(V\circ N) (x).
% $$
% Aplicando $\pi_{Z}$ lado a lado, nos queda que $ \pi_{Z}(x)=(V\circ N) (x)$ y por ende $x=\pik(x)+(V\circ N)(x)$. O sea, $x$ es solución de \ref{Eq dif pvi 1}. Veamos ahora que $\delta(x)=0$.

% \end{proof}

\section{Applications}\label{applic}
\subsection{Periodic solutions of ODEs}
\label{period}
Consider the problem
\begin{align}
x'(t)&=f(t,x(t))\label{EDO}\\
x(0)&=x(T)\label{CP}
\end{align} 
where $f:\mathbb R\times\R^n\to\R^n$ is continuous  and $T$-periodic in $t$. For simplicity, we shall also assume that $f$ is globally  Lipschitz in $x$ with constant $l$. %\textcolor{blue}{En realidad alcanza con pedir que sea localmente Lipschitz, mientras las soluciones est\'en definidas hasta $T$, no?}
A  diagram %$\mathcal{E}$ 
for this equation is the following
 \begin{figure}[h]
    \begin{center}
\includegraphics[scale=.45]{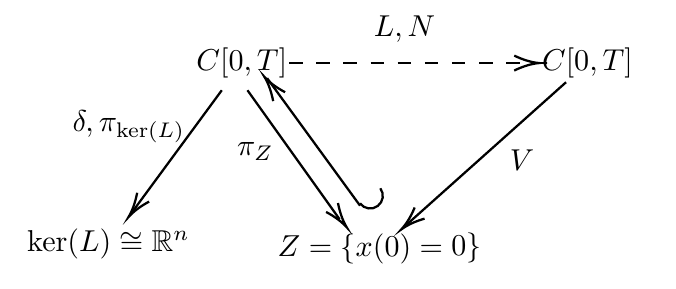}
\end{center}
\end{figure}

\noindent where:
\begin{itemize}
\item $L:\textrm{dom}(L)\subset  C[0,T] \to C[0,T]$ is given by
$Lx=x'$ and %\textcolor{blue}
{$\textrm{dom}(L)=C^1[0,T]$.}

%\begin{align*}
%\textrm{dom}(L)=\lbrace x \in C[0,T]\mid &x \text{ %es derivable en } (0,T), \text{ tiene derivadas laterales }\\ &x'_{+}(0), x'_{-}(T) 
%\text{ y } x' \text{ continua}\rbrace.
%\end{align*}
\item $N:C[0,T]\to C[0,T]$  is the Nemitskii operator associated to $f$, namely  $N(x)(t):=f(t,x(t))$.
\item $\ker(L)=\lbrace x(t)\equiv c: c\in \R^n\rbrace\cong \R^n$.
\item $Z=\lbrace x\in C[0,T]\mid x(0)=0\rbrace$. 
\item $V(x)(t):=\int_0^t x(s)ds$.
\item $\pik(x)\equiv x(0)$.
\item $\delta(x):=x(T)-x(0)$
\end{itemize}
Here, we may consider the flow $\Phi$ associated to the system, that is $\Phi(t,x_0)=x(t)$, where 
$x$ is the unique solution of (\ref{EDO}) such that $x(0)=x_0$, and define 
$\mu(x_0)(t):=\Phi(t,x_0)$. 
Since $\ker(L)\cong \R^n$, we may assume   $\mu:\ker(L)\to C[0,T]$. From the
Arzel\`a-Ascoli Theorem, it is clear that $\mu$ is compact.  
\begin{lemma}
Condition $(J)$ is satisfied.
\end{lemma}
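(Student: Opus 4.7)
The plan is to unpack condition $(J)$ in the concrete periodic setting and reduce it to a classical Gronwall argument, which is exactly what the paper hinted at by calling $(J)$ an ``abstract Gronwall-like inequality.''

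First I would translate the hypothesis $x-y = \eta V(N(x)-N(y))$ using the specific $V$ and $N$ from this diagram. Since $V(u)(t)=\int_0^t u(s)\,ds$ and $N(x)(t)=f(t,x(t))$, the identity becomes
\begin{equation*}
x(t)-y(t) = \eta \int_0^t \bigl(f(s,x(s))-f(s,y(s))\bigr)\,ds, \qquad t\in[0,T].
\end{equation*}

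Next I would take norms and apply the global Lipschitz hypothesis on $f$ with constant $l$, yielding
\begin{equation*}
\|x(t)-y(t)\| \le \eta l \int_0^t \|x(s)-y(s)\|\,ds, \qquad t\in[0,T].
\end{equation*}
The scalar function $\varphi(t):=\|x(t)-y(t)\|$ is continuous and nonnegative, and satisfies $\varphi(t)\le \eta l \int_0^t \varphi(s)\,ds$. Gronwall's lemma (in its integral form, with zero free term) then forces $\varphi\equiv 0$, i.e.\ $x=y$ on $[0,T]$, which is the required conclusion.

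The only mild subtlety is the case $\eta=0$, but then the hypothesis reads $x-y=0$ directly, so there is nothing to prove. No other obstacle is expected: the argument is the standard uniqueness proof for the Cauchy problem associated to $x'=f(t,x)$, just phrased through the abstract operator $V$. I would therefore present the proof in just a couple of lines, citing Gronwall.
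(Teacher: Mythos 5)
Your proposal is correct and follows exactly the paper's argument: rewrite $x-y=\eta V(N(x)-N(y))$ as the integral identity, bound $|x(t)-y(t)|$ by $\eta l\int_0^t|x(s)-y(s)|\,ds$ using the global Lipschitz constant, and conclude $x=y$ by Gronwall's lemma. There is nothing to add.
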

\begin{proof}
Let $x,y\in C[0,T]$ satisfy  $(x-y)(t)=\eta\bigintssss_0^t \left( f(s,x(s)) - f(s,x(s))\right)ds$ for  some $\eta\geq0$, then 
\begin{align}
\vert x(t)-y(t)\vert &=\eta\left\vert\int_0^t \left( f(s,x(s)) - f(s,y(s))\right)ds\right\vert \\
								&\leq \eta \int_0^t \left\vert f(s,x(s)) - f(s,y(s))\right \vert ds\\
                                &\leq   \eta l \int_0^t \vert x(s) - y(s)\vert ds.
\end{align}
From  Gronwall's lemma, we deduce that $\vert x(t)-y(t)\vert=0$ for all $t$. 
\end{proof}  
\begin{lemma}
$\pi=\pik+\delta$ is onto.
\end{lemma}
\begin{proof}
This is trivial from the definition, because  
$\pi(x)=x(T)$.
\end{proof}

Thus, we may consider a diagram $\mathcal B$ as in Lemmas \ref{lemma diagrama base} and
\ref{diagrama de Krasnoselski} 
and assume that     $U_1\subset C[0,T]$ and $U_2\subset \ker(L)$ have common core with respect to $\mathcal{B}$. Here, it is seen that 
$$K(x)(t)= \pi(x)+ (V\circ N)(x)(t)=x(T)+\int_0^t f(r,x(r))dr$$ and $$K_2(x_0)=(\pi\circ\mu)(x_0)=\Phi(T,x_0)$$
coincides, via the isomorphism $\ker(L)\cong \R^n$,
with the Poincar\'e map. 
Then, by Theorem 
\ref{teorema Lx=Nx}, 
 the original result by  Krasnoselskii mentioned in the introduction is retrieved: 
$$\dls_{LS}(I-K,U_1,0)=\dls_B(I-P,U_2,0).$$
Moreover, observe that 
$$K_1(x)(t)=(\mu\circ\pi)(x)(t)= \Phi(t,x(T)),
$$
which yields another one of the equivalences presented in \cite{K}:
$$\dls_{LS}(I-K_1,U_1,0)=\db_B(I-P,U_2,0).$$

\subsubsection*{Other fixed point operators}

In the previous example, it is observed that the operator   $\pi(x)=x(T)$ is a projector onto  $\ker(L)$. With this in mind, we may extend the definition of $K$ to  a more general family of fixed point operators: 
\begin{lemma}
In the previous situation, let $\gamma:C[0,T]\to C[0,T]$ be a projector onto $\ker(L)$ and define the (compact) operator
 $$ K_\gamma :=\gamma + [(\pi - \gamma) + \id_X]\circ V\circ N $$
  Then $K_\gamma$ is a fixed point operator respect to
$\mathcal{B}$ and $$\dls_{LS}(I-K_\gamma,U_1,0)=\db_B(I-P,U_2,0).$$
\end{lemma}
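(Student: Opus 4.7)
The plan is to interpolate between $K$ and $K_\gamma$ via the straight-line homotopy $H_\lambda := \lambda K + (1-\lambda) K_\gamma$ for $\lambda \in [0,1]$. Exploiting linearity in $\gamma$, this family coincides with $K_{\gamma_\lambda}$ built from the convex combination $\gamma_\lambda := \lambda \pi + (1-\lambda)\gamma$; explicitly,
$$H_\lambda(x) = \gamma_\lambda(x) + (1-\lambda)(\pi - \gamma)\bigl(V(N(x))\bigr) + V(N(x)).$$
Since $\mathrm{Im}(\gamma_\lambda) \subset \ker(L)$ and $\gamma_\lambda|_{\ker(L)}=\id$, each $\gamma_\lambda$ is again a projector onto $\ker(L)$; and because $V \circ N$ is compact while $\gamma$ and $\pi$ factor through the finite-dimensional space $\ker(L) \cong \R^n$, every $H_\lambda$ is compact.

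The decisive step will be to show that every fixed point $x$ of $H_\lambda$ lies in $S$. Setting $w := V(N(x))$ and applying $L$ to the identity $x = \gamma_\lambda(x) + (\pi-\gamma_\lambda)(w) + w$, the first two summands vanish (they live in $\ker(L)$) and $L \circ V = \id_Y$, yielding $L(x) = N(x)$. Applying $\gamma$ instead, and invoking $\gamma \circ \pi = \pi$ and $\gamma \circ \gamma_\lambda = \gamma_\lambda$ (both because $\mathrm{Im}(\pi),\mathrm{Im}(\gamma_\lambda) \subset \ker(L)$ and $\gamma$ is the identity on $\ker(L)$), we obtain
$$\gamma(x) = \gamma_\lambda(x) + \pi(w) - \gamma_\lambda(w) + \gamma(w).$$
Now $x - w = \gamma_\lambda(x) + (\pi - \gamma_\lambda)(w) \in \ker(L)$, so $\gamma_\lambda$ acts as the identity on $x-w$; the identity above then collapses to $\pi(w) = 0$, i.e.\ $w(T) = 0$. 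Combined with $w(0)=0$ and the fact that $x-w$ is constant, this forces $x(0) = x(T)$, so $x\in S$.

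Specializing $\lambda=0$ gives $\fp(K_\gamma) \subset S = \alpha_1(S)$; the converse inclusion is a direct substitution, using that for $x\in S$ one has $w = x - x(0)$, $\pi(w)=w(T)=0$, and $\gamma(x - x(0)) = \gamma(x) - x(0)$. Hence $K_\gamma$ is a fixed point operator with respect to $\mathcal{B}$. For the degree identity, the common core hypothesis gives $S\cap \partial U_1 = \emptyset$, so the homotopy $H_\lambda$ is admissible; homotopy invariance of the Leray--Schauder degree then produces
$$\dls_{LS}(I - K_\gamma, U_1, 0) = \dls_{LS}(I - K, U_1, 0),$$
and the equality $\dls_{LS}(I - K, U_1, 0) = \db_B(I - P, U_2, 0)$ proved in the previous subsection finishes the argument.

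The main obstacle is the algebraic derivation of $\pi(w)=0$. It requires three ingredients at once: that $\pi$ and $\gamma$ are both projectors \emph{onto} $\ker(L)$, that this property is inherited by $\gamma_\lambda$, and that $\gamma_\lambda$ therefore restricts to the identity on $\ker(L)$. Had we only assumed $\mathrm{Im}(\gamma) \subset \ker(L)$ or that $\gamma$ were a general idempotent, the cancellations above would break and the homotopy would not remain admissible.
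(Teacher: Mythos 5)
Your proof is correct and follows essentially the same route as the paper: identify the linear homotopy $H_\lambda=\lambda K+(1-\lambda)K_\gamma$ with $K_{\gamma_\lambda}$ for the projector $\gamma_\lambda=\lambda\pi+(1-\lambda)\gamma$, show that the fixed points of any such $K_{\gamma'}$ are exactly the elements of $\alpha_1(S)$, and conclude by homotopy invariance together with the already established equality $\deg_{LS}(I-K,U_1,0)=\deg_B(I-P,U_2,0)$. The only (harmless) deviation is in the inclusion $\alpha_1(S)\subset\fp(K_\gamma)$, where you substitute the explicit identity $V(N(x))=x-x(0)$ valid for periodic solutions, whereas the paper argues slightly more abstractly via the decomposition $C[0,T]=\ker(\gamma)\oplus\ker(L)$ and the modified right inverse $V_\gamma=V-\gamma\circ V$.
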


\begin{proof}
Let $x\in \fp(K_\gamma)$, then
\begin{equation}\label{ident-gamma}
    x=\gamma(x)+ [(\pi - \gamma)\circ V\circ N] (x)+(V\circ N) (x)
\end{equation}
and applying $L$ we obtain  $L(x)=N(x)$. On the other hand, applying $\gamma$ we obtain
$$\gamma(x)=\gamma(x)+((\pi - \gamma)\circ V\circ N) (x)+(\gamma\circ V\circ N) (x)$$
and hence 
$$
(\pi\circ V\circ N)(x)=0.
$$
Finally, applying 
  $\pik$, from the previous equality and employing the fact that $\pik\circ V\equiv 0$ we obtain 
  $$   \pik(x)=\gamma(x)-(\gamma \circ V\circ N) (x).
$$
 In the same way, because  $\pi$ is a  projector 
 we also obtain from (\ref{ident-gamma}) that
  $$\pi(x)=\gamma(x)-(\gamma \circ V\circ N) (x)$$
and consequently  $\delta(x)=\pi(x)-\pik(x)=0$, that is,  $x\in\alpha_1(S)$. 

\noindent Conversely, let $x\in \alpha_1(S)$ and write 
 $C[0,T]= \ker(\gamma)\oplus\textrm{Im}(\gamma)=
 \ker(\gamma)\oplus\ker(L)$. The operator $V_\gamma:=V-\gamma \circ V$ is a right  inverse of $L$ with range in $Z_\gamma:=\ker(\gamma)$, then the equation $L(x)=N(x)$ is equivalent to  $x=\gamma(x) + (V_\gamma\circ N)  (x)$, that is 
\begin{equation}\label{sss}
x=\gamma(x)+(V\circ N) (x)-(\gamma\circ V\circ N) (x).    
\end{equation}
Moreover, we know from Proposition \ref{proposicion 2} that
$$
x=\pi(x)+(V\circ N) (x)
$$ and applying  $\pi$ we deduce that $(\pi\circ V\circ N) (x)=0$. 
Combined with (\ref{sss}), this implies   
$$
x=\gamma(x) + [(\pi-\gamma)\circ V\circ N] (x)+(V\circ N) (x)    
$$ and hence $x\in \fp(K_\gamma)$.

Next, consider the homotopy $H_\lambda=\lambda K +(1-\lambda)K_{\gamma}$ with $\lambda \in  [0,1]$. It is clear that $H_\lambda=K_{\tilde{\gamma}}$ for $\tilde{\gamma}=\lambda \pi +(1-\lambda)\gamma$, which is also a projector. Thus, $\fp(H_\lambda)=\alpha_1(S)$, which does not intersect  $\partial U_1$; hence the degree is well defined along the homotopy and 
$$\dls_{LS}(I-K_\gamma,U_1,0)=\dls_{LS}(I-K,U_1,0)=\db_B(I-P,U_2,0).$$
\end{proof}

 A different operator can be obtained by simply noticing 
 that, if $L(x)=N(x)$, then $x$ satisfies the boundary condition if and only if $\overline {N(x)}=0$, where $\overline \varphi$ denotes the average of a continuous function $\varphi$, namely $\overline \varphi:=\frac 1T\int_0^T\varphi(t)dt$. This motivates the  definition of
$$K_3(x):=\overline{x}+T \overline{N(x)}+V[ N(x)-\overline{N(x)}]-\overline{V [N(x)-\overline{N(x)}]},
$$
which is clearly compact and, as one easily 
verifies,  it is a fixed point operator. 

\begin{corollary}\label{k3} In the previous situation, 
$$\dls_{LS}(I-K_3,U_1,0)=\db_B(I-P,U_2,0).$$
\end{corollary}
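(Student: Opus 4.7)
The plan is to compare $K_3$ with a specific member of the family $K_\gamma$ from the preceding lemma. Take $\gamma_a(x) := \overline{x}$ (identified with the corresponding constant function); this is a continuous projector of $C[0,T]$ onto $\ker(L)$, so by that lemma $\dls_{LS}(I - K_{\gamma_a}, U_1, 0) = \db_B(I - P, U_2, 0)$. It therefore suffices to prove $\dls_{LS}(I - K_3, U_1, 0) = \dls_{LS}(I - K_{\gamma_a}, U_1, 0)$, which we do via a suitable linear homotopy.

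A direct computation, using $\pik(V(N(x))) = 0$ and $\delta(V(N(x))) = T\,\overline{N(x)}$, gives
$$K_{\gamma_a}(x) = \overline{x} + T\,\overline{N(x)} - \overline{V(N(x))} + V(N(x)).$$
Expanding the definition of $K_3$ with $V(\overline{N(x)})(t) = t\,\overline{N(x)}$ and $\overline{V(\overline{N(x)})} = (T/2)\,\overline{N(x)}$ yields
$$K_3(x)(t) = K_{\gamma_a}(x)(t) - \overline{N(x)}\,(t - T/2).$$
Hence the linear homotopy $H_\lambda := (1-\lambda)K_{\gamma_a} + \lambda K_3$ has the explicit form $H_\lambda(x)(t) = K_{\gamma_a}(x)(t) - \lambda\,\overline{N(x)}\,(t - T/2)$ for $\lambda \in [0,1]$, and consists of compact operators.

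The crux is showing that $\fp(H_\lambda) = \alpha_1(S)$ for every $\lambda$. Suppose $H_\lambda(x) = x$. Averaging both sides over $[0,T]$ and using the pivotal identity $\overline{t - T/2} = 0$ reduces the averaged equation to $T\,\overline{N(x)} = 0$, whence $\overline{N(x)} = 0$. The fixed-point equation then collapses to $x = \overline{x} + V(N(x)) - \overline{V(N(x))}$; differentiating gives $L(x) = N(x)$, while evaluating at $t = 0$ and $t = T$ (and using $V(N(x))(T) = T\,\overline{N(x)} = 0$) gives $x(0) = x(T)$. Thus $x \in \alpha_1(S)$. The reverse inclusion is immediate since any periodic solution satisfies $\overline{N(x)} = 0$ and $K_{\gamma_a}(x) = x$. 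By the common-core hypothesis, $\alpha_1(S) \cap \partial U_1 = \emptyset$, so $H_\lambda$ is admissible.

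By the homotopy invariance of the Leray-Schauder degree we conclude
$$\dls_{LS}(I - K_3, U_1, 0) = \dls_{LS}(I - K_{\gamma_a}, U_1, 0) = \db_B(I - P, U_2, 0).$$
The main (if elementary) obstacle is the identity $\overline{t - T/2} = 0$: it is precisely this cancellation that forces $\overline{N(x)} = 0$ uniformly along the homotopy and confines all fixed points of $H_\lambda$ to $\alpha_1(S)$; without it, spurious non-periodic fixed points could in principle appear as $\lambda$ varies.
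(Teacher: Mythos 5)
Your proposal is correct and follows essentially the same route as the paper: identify $K_{\gamma_a}=K_4$ for the projector $\gamma(x)=\overline{x}$, then connect $K_4$ to $K_3$ by the linear homotopy $H_\lambda(x)=K_4(x)-\lambda\,\overline{N(x)}\,(t-T/2)$, which is exactly the paper's homotopy written in expanded form. Your averaging argument showing $\overline{N(x)}=0$ along the homotopy supplies the admissibility detail that the paper leaves implicit.
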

\begin{proof}
Let  $K_4(x):=\overline{x}+T \overline{N(x)}+V(N(x))-\overline{V( N(x))}$, then  $K_4=K_\gamma$ for the projector $\gamma(x):=\overline{x}$. Then $K_4$ is a fixed point operator and 
$$\dls_{LS}(I-K_4,U_1,0)=\db_B(I-P,U_2,0).$$
Next, consider the homotopy 
$$
H_{\lambda}(x)=\overline{x}+T \overline{N(x)}+V\left( N(x)-\lambda \overline{N(x)}\right)-\overline{V \left(N(x)-\lambda \overline{N(x)}\right)},
$$
which is also a fixed point operator for all $\lambda\in [0,1]$. As before, we deduce that $H_\lambda$ is admissible and $H_0 =K_4$, whence $$\dls_{LS}(I-K_3,U_1,0)=\dls_{LS}(I-K_4,U_1,0)=\db_B(I-P,U_2,0).$$
\end{proof}
{The latter operator is closely related to the  standard Lyapunov-Schmidt decomposition for the problem $x'(t)= f(t,x(t))$ in $C_T$, which can be 
expressed as
$$\overline{N_Tx}=0,\qquad  x -\overline x =\tilde V({N_Tx}-\overline{N_Tx} )
$$
where $N_T:C_T\to C_T$ is given as before by $N_T(x)(t):=f(t,x(t))$
%the notation $\tilde \varphi$ stands for the projection to the subspace 
%$Z_T:=\{ \varphi\in C_T:\overline\varphi=0\}$ given by 
%$\tilde \varphi:=\varphi-\overline\varphi$ 
and the linear operator 
$\tilde V$ is defined for $\varphi\in C_T$ such that $\overline\varphi=0$ as  the unique $x\in C_T^1$ such that $x'=\varphi$ and $\overline x=0$. 
Thus, the problem is equivalently written as
$$x= \overline x + T\overline {N_T(x)} + \tilde V({N_Tx}- \overline{N_Tx}):= K_5(x).
$$
In order to prove the relatedness principle for this operator, it proves convenient to notice that 
 $$K_5(x)=K_3\left(x|_{[0,T]}\right)$$
 extended $T$-periodically to $\mathbb R$. 
 Moreover, we may define 
 $$\tilde S:=\{ x\in C_T: \hbox{$x$ is a solution of (\ref{sistema especial 1})}\}=\fp(K_5)$$
 and recall that $S$ is the set of solutions of (\ref{sistema especial 1})-(\ref{sistema especial 2}) in $X=C[0,T]$. 
 }

\begin{proposition}
Let $\tilde U\subset C_T$ and $U\subset C[0,T]$ be open bounded sets such that
\begin{itemize}
    \item $\partial \tilde U\cap \tilde S=\emptyset$ and $\partial U\cap S=\emptyset$.   
    \item For $x\in C_T$, 
    $$x\in \tilde U\cap \tilde S \iff x|_{[0,T]}\in U\cap S.
    $$
 \end{itemize}
Then 
$$
\dls_{LS}(I-K_3,U,0)=\dls_{LS}(I-K_5,\tilde U,0).
$$
\end{proposition}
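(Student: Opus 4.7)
The plan is to apply the Conjugacy Lemma and Proposition~\ref{conjugacion} to transfer the degree of $I-K_5$ from $C_T$ to $C[0,T]$, and then to deform the resulting operator to $K_3$ via an admissible homotopy.

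For the transfer step, let $i:C_T\to C[0,T]$ be restriction to $[0,T]$, and define a continuous right inverse $\pi:C[0,T]\to C_T$ by setting $\pi(y)(t):=y(t)-\frac{t}{T}(y(T)-y(0))$ for $t\in[0,T]$ and extending $T$-periodically; the formula takes the value $y(0)$ at both endpoints, so the extension is continuous, and $\pi\circ i=\id_{C_T}$. With $X=C_T$, $\tilde X=C[0,T]$, the compact operator $K=K_5$, and $S=\tilde S=\fp(K_5)$, Lemma~\ref{conjug} yields, for $r\gg 0$,
$$\dls(I-K_5,\tilde U,0)=\dls(I-\tilde K,V_r,0),$$
where $\tilde K:=i\circ K_5\circ\pi$ and $V_r=\pi^{-1}(\tilde U)\cap B(0,r)$. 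Restriction and $T$-periodic extension give a bijection $\tilde S\leftrightarrow S$, so $i(\tilde S)=S$; combined with the standing hypothesis this yields $U\cap i(\tilde S)=i(\tilde U\cap\tilde S)$ and $\partial U\cap i(\tilde S)=\emptyset$, so Proposition~\ref{conjugacion} applies and
$$\dls(I-\tilde K,U,0)=\dls(I-K_5,\tilde U,0).$$

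For the homotopy step, introduce the bounded linear operator $A_\lambda$ on $C[0,T]$ given by $A_\lambda(y)(t):=y(t)-\lambda\frac{t}{T}(y(T)-y(0))$, so that $A_0=\id$ and $\pi(y)|_{[0,T]}=A_1(y)$. Since $K_5(x)|_{[0,T]}=K_3(x|_{[0,T]})$ by construction of $K_5$, we have $\tilde K=K_3\circ A_1$, and the compact homotopy $H_\lambda:=K_3\circ A_\lambda$ satisfies $H_0=K_3$ and $H_1=\tilde K$. Admissibility on $\partial U$ follows from the key observation that every image of $K_3$ satisfies $K_3(z)(0)=K_3(z)(T)$, since $V[\varphi-\overline\varphi]$ vanishes at both $t=0$ and $t=T$. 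Indeed, if $y\in\overline U$ satisfies $y=H_\lambda(y)=K_3(A_\lambda(y))$, then $y(0)=y(T)$, hence $A_\lambda(y)=y$, and consequently $y=K_3(y)\in S$; since $S\cap\partial U=\emptyset$, $y\notin\partial U$. Homotopy invariance of the Leray-Schauder degree then yields $\dls(I-K_3,U,0)=\dls(I-\tilde K,U,0)$, which combined with the previous equality gives the claim.

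The main obstacle is choosing the right $\pi$ as a continuous right inverse of restriction $C_T\hookrightarrow C[0,T]$ and recognizing why the homotopy is admissible: the built-in boundary condition satisfied by every element of the image of $K_3$ forces the endpoint mismatch of any $H_\lambda$-fixed point to vanish, collapsing $A_\lambda$ to the identity and pinning the fixed point to $S$.
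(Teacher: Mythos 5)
Your proof is correct, and it rests on the same key observation as the paper's --- that $\mathrm{Im}(K_3)\subset X_0:=\ker(\delta)=\{y\in C[0,T]:y(0)=y(T)\}$, because $V[\varphi-\overline\varphi]$ vanishes at $t=0$ and $t=T$ --- but the execution is genuinely different. The paper first uses the reduction property of the Leray--Schauder degree to replace $\deg(I-K_3,U,0)$ by $\deg((I-K_3)|_{X_0},U\cap X_0,0)$, and then applies Proposition \ref{conjugacion} to the pair $(\tilde\pi,\tilde i)$ of restriction and $T$-periodic extension between $X_0$ and $C_T$, using the exact identity $K_5=\tilde i\circ K_3\circ\tilde\pi$; no homotopy is needed. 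You instead conjugate $C_T$ into all of $C[0,T]$ via the affine right inverse $\pi(y)(t)=y(t)-\tfrac tT(y(T)-y(0))$, which produces $\tilde K=K_3\circ A_1$ rather than $K_3$ itself, and then you remove the tilt $A_1$ by the homotopy $K_3\circ A_\lambda$; the admissibility argument (any fixed point of $H_\lambda$ lands in $X_0$, forcing $A_\lambda$ to act as the identity on it and pinning the point to $\fp(K_3)=S$) is sound, as is your verification of the common-core hypotheses of Proposition \ref{conjugacion} via the bijection $\tilde S\leftrightarrow S$. What the paper's route buys is brevity, since the reduction property does all the work in one line; what your route buys is that you never need to work in the subspace $X_0$ or invoke the reduction property, at the modest cost of constructing a clever right inverse and one extra (but easily justified) compact homotopy.
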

\begin{proof}
{Let $X_0=\ker(\delta)=\{x\in C[0,T]:x(0)=x(T)\}$, then $\textrm{Im}(K_3)\subset X_0$. From the properties of the degree, we know that 
$$\dls_{LS}(I-K_3,U,0)=\dls_{LS} ((I-K_3)|_{X_0},U\cap X_0,0).
$$
Next, set $\tilde\pi:C_T\to X_0$ as the restriction and $\tilde i:X_0\to C_T$ as the $T$-periodic extension, then the result is a direct consequence of Proposition \ref{conjugacion} and the fact that $K_5= \tilde i\circ K_3\circ \tilde \pi$.}
\end{proof}

%\textcolor{blue}
{Let us consider now the fixed point operator defined as follows. For arbitrary $\eta\ne  0$, let $K^\eta:C_T\to C_T$ be given by  $K^\eta(x)= y$, where $x$ is the unique $T$-periodic solution of the problem  
$$y'(t) + \eta y(t) = f(t,x(t)) + \eta x(t).$$
In order to analyze the affinity with respect to the preceding operators, let us consider firstly the case  $\eta>0$ and take again the linear homotopy 
$$H_\lambda(x):= \lambda K^\eta(x) + (1-\lambda) K_5(x).  $$
Suppose  $x=H_\lambda(x)$, then taking average we obtain 
$$\overline x = 
\lambda \left[\overline x + \frac {\overline{N_Tx}}\eta \right]
+(1-\lambda)[\overline x + T\overline{N_Tx}]
$$
that is
$$0=(1-\lambda)  T\overline{N_T(x)}  + \lambda\frac {\overline{N_T(x)}}\eta,
$$
whence 
$\overline{N_Tx}=0$. Setting $y=K^\eta(x)$, it follows that
$$x' = N_T(x) + \lambda  \eta(x-y)$$
and
$$y' = N_T(x) + \eta (x-y).
$$
Thus, $$
(x-y)'  = (\lambda-1)(x-y).
$$
When $\lambda <1$, 
 the fact  $x-y\in C_T$ implies $x=y$ and hence $x'=N_T(x)$; on the other hand, the case $\lambda=1$ corresponds to a fixed point of $K^\eta$, so we also deduce  that $x'=N_T(x).$ In consequence, $x$ is a solution and   $K^\eta$ is homotopic to $K_5$, provided that there are no solutions on $\partial \tilde U$. } 

%\textcolor{blue}
{Now assume, instead, that $\eta<0$, then we may define 
$$\hat K_5(x):= \overline x- T\overline{N_T(x)} + \tilde K({N_T(x)-\overline{N_Tx}}) $$
and it follows exactly as before that $K^\eta$ is homotopic to $\hat K_5$. Furthermore, if we 
define $\hat K_3$ by changing in $K_3$ the sign of the term   $T\overline N_T(x)$ and 
set $U$ and $\tilde U$  as before, then
  $$\deg_{LS}(I-\hat K_5,\tilde U,0)=\deg_{LS}(I-\hat K_3,   U,0).
$$

 {Next we may take,  in the original diagram, $\delta(x):=x(0)-x(T)$. Then the mapping  $$\pi(x)=2x(0)-x(T)$$ 
is, as before, a projector and taking $\gamma(x)=\overline x$
we obtain the corresponding operators
$$K_\gamma(x)=\overline x + [-V(N(x))(T) - \overline{V(N(x))} + V(N(x))]=\hat K_3(x)$$
and
$$K_2(x_0)=2x_0 - \Phi(T,x_0).$$
Thus, 
$$\deg_{LS}(I-\hat K_3, U,0) = \deg_B(I-K_2,U_2,0).
$$
Moreover, observe that 
$$I-K_2(x_0)= \Phi(T,x_0)-x_0=-(I-P)(x_0),$$}
whence
$$
\deg_B(I-K_2,U_2,0)= (-1)^n\deg_B(I-P,U_2,0).
$$

Summarizing, we have established: 
\begin{proposition}\label{eta-period}
Let $U$ and $\tilde U$ be as before, then
$$\deg_{LS}(I-K^\eta,\tilde U,0)= 
sgn(\eta)^n \deg_{LS}(I-K_3, U,0).
$$
\end{proposition}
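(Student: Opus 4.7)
The plan is to split into the two cases $\eta>0$ and $\eta<0$ and to stitch together the chain of equalities that the preceding paragraphs have essentially already established, treating this proposition as a bookkeeping result.

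For $\eta>0$, the key input is the linear homotopy $H_\lambda=\lambda K^\eta+(1-\lambda)K_5$ that was analyzed just above the statement: the argument there shows that any $x\in\mathrm{fp}(H_\lambda)$ must satisfy $\overline{N_Tx}=0$ and then $(x-y)'=(\lambda-1)(x-y)$ for $y=K^\eta(x)$, which forces $x=y$ and hence $x'=N_T(x)$, i.e.\ a genuine $T$-periodic solution. Since by hypothesis $\partial\tilde U\cap\tilde S=\emptyset$, the homotopy is admissible and $\deg_{LS}(I-K^\eta,\tilde U,0)=\deg_{LS}(I-K_5,\tilde U,0)$. The immediately preceding proposition (conjugation of $K_3$ and $K_5$ via $T$-periodic extension and restriction) gives $\deg_{LS}(I-K_5,\tilde U,0)=\deg_{LS}(I-K_3,U,0)$, and since $\mathrm{sgn}(\eta)^n=1$ this closes the case.

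For $\eta<0$, the analogous homotopy argument (already carried out in the text using $\hat K_5$ in place of $K_5$) gives $\deg_{LS}(I-K^\eta,\tilde U,0)=\deg_{LS}(I-\hat K_5,\tilde U,0)$, and the same conjugation trick as in the previous proposition, applied to $\hat K_5=\tilde i\circ\hat K_3\circ\tilde\pi$, yields $\deg_{LS}(I-\hat K_5,\tilde U,0)=\deg_{LS}(I-\hat K_3,U,0)$. Next I would invoke Theorem \ref{teorema Lx=Nx} applied to the modified diagram with $\delta(x):=x(0)-x(T)$ and $\pi(x)=2x(0)-x(T)$, together with the choice $\gamma(x):=\overline x$: as the text already verifies, this produces exactly $K_\gamma=\hat K_3$ on the functional side and $K_2(x_0)=2x_0-\Phi(T,x_0)$ on the finite-dimensional side, so $\deg_{LS}(I-\hat K_3,U,0)=\deg_B(I-K_2,U_2,0)$. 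Finally, the identity $I-K_2=-(I-P)$ on $\mathbb R^n$ together with the linearity property of the Brouwer degree gives $\deg_B(I-K_2,U_2,0)=(-1)^n\deg_B(I-P,U_2,0)$, and the original Krasnoselskii relation from Section \ref{period} identifies the right-hand side with $(-1)^n\deg_{LS}(I-K_3,U,0)=\mathrm{sgn}(\eta)^n\deg_{LS}(I-K_3,U,0)$.

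The only nontrivial point I expect is to verify that the hypotheses on $U$ and $\tilde U$ (namely $\partial U\cap S=\emptyset$, $\partial\tilde U\cap\tilde S=\emptyset$ and compatibility under restriction/extension) are strong enough both to make every homotopy in the chain admissible and to ensure that the sets $U_1=U$, $U_2=\pik(U)$ have common core with respect to the modified diagram built from $\delta(x)=x(0)-x(T)$. This amounts to checking that the solution set $S$ of (\ref{sistema especial 1})--(\ref{sistema especial 2}) is unchanged when $\delta$ is replaced by $-\delta$, which is immediate, and that the compatibility condition on $U,\tilde U$ transfers to the finite-dimensional $U_2$ via the Poincaré map; everything else reduces to the admissibility estimates already performed in the discussion preceding the statement.
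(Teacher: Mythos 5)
Your proposal is correct and follows essentially the same route as the paper: for $\eta>0$ the linear homotopy to $K_5$ followed by the conjugation with $K_3$, and for $\eta<0$ the homotopy to $\hat K_5$, the conjugation to $\hat K_3$, the identification $\hat K_3=K_\gamma$ for the diagram with $\delta(x)=x(0)-x(T)$ and $\gamma(x)=\overline x$, and finally the sign computation from $I-K_2=-(I-P)$. The paper presents this chain informally in the surrounding text rather than as a displayed proof, and your closing remark about checking the common-core hypotheses for the modified diagram is a point the paper itself leaves implicit.
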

}

\subsection{Second order equation with Dirichlet conditions}
\label{dirich}

Let us consider the problem 
\begin{align}
x'' = & f(t,x(t)) \label{Dirichlet homogeneo 2 orden 1-bis}\\
x(0)= \ & x(1)=0
\label{Dirichlet homogeneo 2 orden 2-bis},
\end{align}
where  $f:[0,T]\times \R^n\to \R^n$ is continuous and Lipschitz in $x$ with constant $l$. %\textcolor{blue}{Se puede poner $f(t,x,x')$, no?}
A diagram  for this equation reads
 \begin{figure}[h]
    \begin{center}
\includegraphics[scale=.45]{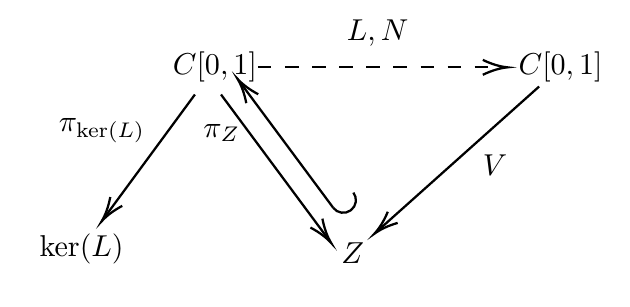}
\end{center}
\end{figure}
\newpage 
\noindent where:
\begin{itemize}
\item  $X=C^1[0,1]$ and  $Lx:=x''$ is defined in $\textrm{dom}(L)=C^2[0,T]$. 
 \item $N:X\to X$ is the Nemitskii operator $N(x)(t):=f(t,x(t))$.
\item $\ker(L)=\lbrace x(t)=ta+b: a\in \R^n,b\in \R^n\rbrace\cong \R^{n}\times \R^{n}$.
\item $Z=\lbrace x\in C^1[0,1]: x(0)=x'(0)=0\rbrace$. 
\item $Vx(t)=\int_0^t \int_0^r x(s)dsdr$.
\item $\pik(x)=tx'(0)+x(0)\cong (x'(0),x(0))$. %\textcolor{blue}{Danger! Esto no est\'a definido en $X$. Habr\'a que tomar $X=C^1$?}
\item $\delta(x)=tx(1)+x(0) \cong (x(1),x(0))$. 
\end{itemize}

In this setting, we may define $\mu(ta+b)$ as the unique solution of (\ref{Dirichlet homogeneo 2 orden 1-bis}) with initial condition $x(0)=b$,  $x'(0)=a$. It is clear that $\mu$ is well defined and compact. 

The following lemma is a direct consequence of  Gronwall's inequality:
\begin{lemma}\label{Gronwall de segundo orden}
Assume that $u\in C[0,1]$ satisfies  
\begin{equation}\label{hipotesis Gronwall segundo orden}
0\le u(t)\leq c^2 V(u)(t) \qquad \forall t \in [0,1].
\end{equation}
Then $u\equiv 0$.
\end{lemma}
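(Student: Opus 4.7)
The goal is to reduce the second-order integral inequality to the classical first-order Gronwall inequality, which was already used earlier in the paper (in the periodic ODE section).

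My plan is to first rewrite $V(u)$ as a single integral using Fubini. Observe that
\[
V(u)(t)=\int_0^t\!\int_0^r u(s)\,ds\,dr = \int_0^t u(s)\left(\int_s^t dr\right)ds = \int_0^t (t-s)\,u(s)\,ds.
\]
Since the hypothesis restricts $t$ to $[0,1]$, we have $(t-s)\le 1$ for $0\le s\le t\le 1$, and therefore the assumption $0\le u(t)\le c^2 V(u)(t)$ becomes the first-order estimate
\[
0\le u(t)\le c^2\int_0^t u(s)\,ds, \qquad t\in[0,1].
\]

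From here, the classical Gronwall lemma (in the form: if $u(t)\le a+b\int_0^t u(s)\,ds$ with $a=0$, then $u(t)\le 0$) yields $u(t)\le 0$, and combined with $u(t)\ge 0$ we conclude $u\equiv 0$.

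There is essentially no obstacle: the only subtlety is recognizing that the double integral $V(u)$ is actually a convolution-type integral with kernel $(t-s)$, which on $[0,1]$ is bounded by $1$. Once this is observed, the reduction to the linear Gronwall inequality is immediate and the conclusion follows. (Alternatively, one could set $W(t):=V(u)(t)$, note $W''(t)=u(t)\le c^2 W(t)$ with $W(0)=W'(0)=0$, and argue directly via the maximum of $W$ on a small interval; but the Fubini reduction is shorter and reuses the tool already invoked in Lemma for condition $(J)$.)
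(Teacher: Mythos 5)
Your proof is correct and follows essentially the same route as the paper: both rewrite $V(u)(t)=\int_0^t(t-s)u(s)\,ds$ and then invoke the standard Gronwall lemma, with your version merely making explicit the kernel bound $(t-s)\le 1$ (which uses $u\ge 0$) that the paper leaves implicit.
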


\begin{proof}
It suffices to observe that $V(u)(t)= \int_0^t(t-s)u(s)ds$ and apply the standard Gronwall lemma. 
 \end{proof}
As a consequence, we deduce that the 
preceding diagram  satisfies $(J)$. 
Indeed, 
if for some $\eta\ge 0$ one has
$$x(t)-y(t)=\eta\int_0^t \int_0^r \left[ f(s,x(s)) - f(s,y(s))\right]dsdr,$$
 then 
$$|x(t)-y(t)|\le \eta l \int_0^t \int_0^r |x(s)-y(s)|dsdr
$$
and  Lemma \ref{Gronwall de segundo orden} applies.

 \begin{lemma}
$\pi=\pik+\delta$ is onto.
\end{lemma}
\begin{proof}
By definition, $\pi(x)=t[ x(1)+x'(0) ]  + 2x(0)$. Given 
$y(t)=ta+b$, then $x(t)= t^2\left(a - \frac b2\right) + \frac b2$ satisfies $\pi(x)=y$. 
 \end{proof}

%\textcolor{blue}
{In the previous setting,  consider the diagram 
$\mathcal{B}$ associated to (\ref{Dirichlet homogeneo 2 orden 1-bis})-(\ref{Dirichlet homogeneo 2 orden 2-bis}) and assume that the open bounded sets $U_1\subset C[0,T]$ and $U_2\subset \ker(L)$ have common core with respect to  $\mathcal{B}$. 
In the situation of Theorem \ref{teorema Lx=Nx}, we have that 
$$K(x)(t)=t[x(1)+x'(0)] +2x(0)+\int_0^t \int_0^r  f(s,x(s))dsdr,$$  
$$K_2(ta+b)=(\pi\circ\mu)(ta+b)$$
and it follows that
$$\dls_{LS}(I-K,U_1,0)=\dls_{B}(I-K_2,U_2,0).$$
Since we are dealing with a Dirichlet problem, it   is worthy comparing the latter operator with the  
shooting operator 
$\s:\R^n\to \R^n$ given by $\s(a):=x(1)$, where $x(t)$ is the 
unique solution of the equation such that $x(0)=0$ and $x'(0)=a$. For convenience, 
we may write $\s(a)=\Phi^1(1,0,a)$, where $\Phi(t,b,a)=(\Phi^1,\Phi^2)$ 
denotes the associated flow.  Moreover, we may define the (open) mapping $\varphi:\ker(L)\to\R^n$ given by $\varphi(ta+b):=a$. 

\begin{proposition} \label{deg-shoot} In the previous situation, 
$$\deg_B(\s,\varphi(U_2),0)=
   \deg_B(I-K_2,U_2,0). 
$$
\end{proposition}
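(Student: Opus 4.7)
The plan is to exploit the product structure of $K_2$ on $\ker(L)\cong\R^n\times\R^n$ and reduce the $2n$-dimensional Brouwer degree of $I-K_2$ to the $n$-dimensional degree of $\s$ via a homotopy and the product rule.

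First, a direct computation using $\pi(x)=t[x'(0)+x(1)]+2x(0)$ together with the fact that $\mu(ta+b)$ is the solution of \eqref{Dirichlet homogeneo 2 orden 1-bis} with $x(0)=b$ and $x'(0)=a$ gives
$$K_2(ta+b)=t\bigl[a+\Phi^1(1,b,a)\bigr]+2b.$$
Under the identification $ta+b\leftrightarrow(a,b)\in\R^n\times\R^n$ this reads
$$(I-K_2)(a,b)=\bigl(-\Phi^1(1,b,a),\,-b\bigr).$$
To decouple the first coordinate from $b$, I would introduce
$$H_\lambda(a,b):=\bigl(-\Phi^1(1,\lambda b,a),\,-b\bigr),\qquad\lambda\in[0,1],$$
so $H_1=I-K_2$ while $H_0(a,b)=(-\s(a),-b)$ is a genuine product map. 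A zero of $H_\lambda$ forces $b=0$ and then $\Phi^1(1,0,a)=\s(a)=0$; hence $H_\lambda^{-1}(0)=\alpha_2(S)$ for every $\lambda$, and the common-core condition $\alpha_2(S)\cap\partial U_2=\emptyset$ makes the homotopy admissible, so
$$\deg_B(I-K_2,U_2,0)=\deg_B(H_0,U_2,0).$$

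Since $H_0=(-\s)\times(-\id_{\R^n})$, I would then evaluate $\deg_B(H_0,U_2,0)$ by excising to product neighbourhoods $B_{a^*}\times B_0\subset U_2$ around each zero $(a^*,0)$. The product rule for the Brouwer degree gives
$$\deg_B(H_0,B_{a^*}\times B_0,0)=\deg_B(-\s,B_{a^*},0)\cdot\deg_B(-\id,B_0,0)=(-1)^{2n}\deg_B(\s,B_{a^*},0)=\deg_B(\s,B_{a^*},0).$$
Summing over the (locally finitely many) zeros and re-applying excision on the $\R^n$ side produces $\deg_B(\s,\varphi(U_2),0)$, and the proposition follows.

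The main obstacle is this final bookkeeping step: one must verify that every zero $a^*$ of $\s$ lying in $\varphi(U_2)$ corresponds via $a^*\mapsto(a^*,0)$ to a zero of $H_0$ actually inside $U_2$, rather than some spurious $a^*$ whose only preimage in $U_2$ is an $(a^*,b^*)$ with $b^*\ne 0$. Ruling this out uses $\alpha_2(S)\cap\partial U_2=\emptyset$ from common core together with the openness of $U_2$ and $\varphi$; once this bijection between zero sets is in place, the excision on the two sides lines up and the remaining steps are routine properties of the Brouwer degree.
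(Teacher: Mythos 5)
Your computation of $(I-K_2)(ta+b)=-\bigl(t\Phi^1(1,b,a)+b\bigr)$ and the overall strategy (decouple the two copies of $\R^n$ by an admissible homotopy whose zeros are forced to have $b=0$, then reduce to $\s$) is exactly the paper's strategy, but the two key steps are carried out differently. The paper absorbs the sign via $(-1)^{2n}=1$ and joins $F(ta+b)=t\Phi^1(1,b,a)+b$ to $(I-g)(ta+b)=t\s(a)+b$ by the \emph{linear} homotopy $\lambda F+(1-\lambda)(I-g)$, whereas you deform the initial position inside the flow, $(-\Phi^1(1,\lambda b,a),-b)$; both are admissible for the same reason (any zero has $b=0$ and then $\s(a)=0$), so this difference is cosmetic. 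The genuine divergence is the last reduction: since $g(ta+b)=t(a-\s(a))$ has range in $W=\{ta\}$, the paper invokes the reduction property $\deg_B(I-g,U_2,0)=\deg_B((I-g)|_W,U_2\cap W,0)$ and is done in one line, while you use the product formula after excising to product neighbourhoods ``around each zero''. As written that step has a gap: the zero set of $H_0$ is $\{(a,0):\s(a)=0\}\cap U_2$, which need not be finite or discrete, so there is no locally finite collection of isolated zeros to excise around. It is repairable --- the zero set is compact and misses $\partial U_2$, so it sits inside a single product $\omega\times B_\epsilon\subset U_2$, and one excision plus the product rule gives $(-1)^n\deg_B(\s,\omega,0)\cdot(-1)^n$ --- but the reduction property is the cleaner tool.

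Concerning the ``bookkeeping'' issue you flag: it is real, and $\alpha_2(S)\cap\partial U_2=\emptyset$ together with openness does \emph{not} dispose of it, since that condition only rules out $(a^*,0)\in\partial U_2$ and not the case $\s(a^*)=0$, $a^*\in\varphi(U_2)$ with $(a^*,0)\notin\overline{U_2}$. What your argument (and, in fact, the paper's) establishes is the identity with $\varphi(U_2\cap W)=\{a:(a,0)\in U_2\}$ in place of $\varphi(U_2)$; the passage from one to the other is left implicit in the paper as well, so you are not worse off on this point.
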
}
{\begin{proof}
Consider the subspace  $W:=\{ta:a\in\R^n\}\subset\ker(L)$ and define the function $g:\ker(L)\to W$ given by $g(ta+b)=t(a-\mathcal S(a))$. From the properties of the Brouwer degree, 
$$\deg_B(I-g,U_2,0)= \deg_B((I-g)|_W,{U_2\cap W},0).
$$
Because $(I-g)(ta)=t\mathcal S(a)$ and $\varphi|_W$ is an isomorphism, the latter degree is clearly equal to $\deg_B(\mathcal S,\varphi(U_2),0)$. On the other 
hand, observe that 
$$(I-K_2)(ta+b)= -(t\Phi^1(1,b,a) + b)  
$$
and hence
$$\deg_B(I-K_2,U_2,0)= (-1)^{2n}\deg_B(F,U_2,0) = \deg_B(F,U_2,0), 
$$
where $F(ta+b):= t\Phi^1(1,b,a) + b$. Thus, it suffices to verify that $F$ and $I-g$ are homotopic. To this end, suppose that
$$\lambda F(ta+b)+(1-\lambda)(t\mathcal S(a) + b)=0,
$$
then 
$$\lambda \Phi^1(1,a,b)+(1-\lambda)\mathcal S(a)=0 \qquad\hbox{and}\qquad b=0.
$$
Next, recall  that $\Phi^1(1,a,0)=\mathcal S(a)$; hence, we conclude that $\mathcal S(a)=0$ and it follows from the assumptions that $ta\notin \partial U_2$. 
\end{proof}}

Several other examples of fixed point operators for this problem shall be defined in section \ref{further}.

{
 \subsection{An elliptic problem with nonlocal conditions} \label{elliptic-sec}
 Let us consider the elliptic system  
 \begin{equation}\label{elliptic}
     \Delta u(x)= f(x,u(x))\qquad x\in \Omega
 \end{equation} 
with the nonlocal condition 
\begin{equation}
 \label{nonlocal} u|_{\partial\Omega} = \textit{constant},\qquad \int_{\partial\Omega}\frac{\partial u}{\partial \nu} dS=0.    
\end{equation}
Here, $\Omega\subset \R^d$ is a smooth bounded domain,  $f:\overline\Omega\times \R^n\to \R^n$ is smooth
and we set
$$X:= \{u\in C^{1,\alpha}(\overline\Omega): u|_{\partial\Omega} = \textit{constant}\},$$
$$Y= C^\alpha(\overline\Omega)$$
for some $\alpha\in (0,1)$;

\noindent $L:D\to Y$ given by $Lu:=\Delta u$, with $D:= X\cap C^{2,\alpha}(\overline\Omega); $
$$N(u)(x):=f(x,u(x))$$
and $V:C^\alpha(\overline\Omega)\to D$ defined by $V\varphi:=u$, the unique solution of the problem
$$\Delta u =\varphi,\qquad u|_{\partial \Omega}=0. 
$$
It is clear that $\ker(L)$ is the set of constant functions, which shall be identified with $\R^n$, and we may consider 
$$\pi_{\ker(L)}(u):=u|_{\partial\Omega}$$
and 
$$\delta(u):= \int_{\partial\Omega}\frac{\partial u}{\partial \nu} dS.
$$
In order to define a function $\mu$ as before, we shall assume the following monotonicity assumption
\begin{equation}
    \label{monot}
    \langle f(x,v)-f(x,u), v-u\rangle \ge -\kappa |u-v|^2
\end{equation}
where $\kappa < \lambda_1^{(D)}$, the first eigenvalue of $-\Delta$ for 
the Dirichlet conditions. Then $\mu:\R^n\to D$ is defined by $\mu(c):= u_c$, the unique solution of (\ref{elliptic}) such that $u|_{\partial\Omega}=c$. 
{The fact that $\mu$ is well defined is known, and can be regarded as one of those ``uniqueness implies existence" results. 
Indeed, if $u$ and $v$ are solutions of (\ref{elliptic}) such that $u=v$ on $\partial\Omega$, then
$$\langle\Delta(u-v),u-v\rangle|_x = \langle f(\cdot,u)-f(\cdot,v),u-v\rangle|_x \ge -\kappa |u(x)-v(x)|^2;
$$
thus, 
$$\int_{\Omega}|\nabla(u-v)(x)|^2\, dx \le \kappa \|u-v\|_{L^2}^2 
$$
and from the Poincar\'e inequality it follows that $u\equiv v$. Moreover, writing
$$\Delta u(x)=f(x,u(x)) - f(x,c) + f(x,c)
$$
and multiplying by $u(x)-c$
it is seen that
$$\|\nabla (u-c)\|_{L^2}^2 \le \kappa \|u-c\|_{L^2}^2 + k \|u-c\|_{L^2}
$$
for some constant $k$. This yields \textit{a priori} bounds for the solutions and the existence of $u_c$ follows from a standard 
truncation argument. Observe, furthermore, that the same computation shows that condition $(J)$ is satisfied: if $u-v= \eta V(N(u)-N(v))$ with $\eta\ge 0$, then $u=v=0$ on $\partial \Omega$ and  $$\Delta(u-v)(x)=\eta (f(x,u(x))-f(x,v(x)).$$ As before, it is deduced that $u\equiv v$.
}

In this setting, one has
$$\pi(u)=\pi_{\ker(L)}(u) + \delta(u)= u|_{\partial\Omega} + \int_{\partial\Omega}\frac{\partial u}{\partial \nu} dS,
$$
which is clearly onto,
and
$$K(u)= \pi(u)+ (V\circ N)(u)= v,$$
where $v$ is the unique solution of (\ref{elliptic}) such that 
$$v|_{\partial\Omega} = u|_{\partial\Omega} + \int_{\partial\Omega}\frac{\partial u}{\partial \nu} dS.
$$
It is readily seen  that $K$ is a fixed point operator. Moreover,  we may also compute $K_1= \pi \circ \mu:\R^n\to \R^n$ and $K_2=\mu \circ \pi:X\to X$ explicitly as
$$K_1(c)= c + \int_{\partial\Omega}\frac{\partial u_c}{\partial\nu}dS, 
$$
$$K_2(u)=u_{\pi(u)},
$$
that is, $K_2(u)$ is  the unique solution  of (\ref{elliptic}) whose value over the boundary of $\Omega$ is equal to
$ u|_{\partial\Omega} + \int_{\partial\Omega}\frac{\partial u}{\partial\nu}dS$.
 From the previous results, we deduce: 
\begin{proposition} Assume that $U_1\subset \R^n$ and $U_2\subset X$ are open bounded sets with  common core, then 
$$\deg_{LS}(I-K_2,U_2,0)=\deg_{LS}(I-K,U_2,0)=\deg_{B}(I-K_1,U_1,0).$$
\end{proposition}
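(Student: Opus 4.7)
The plan is to reduce the statement directly to the abstract results of Section \ref{lxnx}. The text preceding the proposition has already verified the main hypotheses required by Theorems \ref{teorema principal} and \ref{teorema Lx=Nx}: the monotonicity assumption (\ref{monot}) together with the Poincar\'e inequality yields condition $(J)$; the mapping $\pi=\pi_{\ker(L)}+\delta$ is onto, since for any $c\in\R^n$ the constant function $u\equiv c$ belongs to $X$ and satisfies $\pi(u)=c$; the operator $V\circ N$ is compact by standard elliptic $C^{2,\alpha}$ regularity for the Poisson problem combined with the compact embedding $C^{2,\alpha}(\overline\Omega)\hookrightarrow C^{1,\alpha}(\overline\Omega)$; and the solution operator $\mu:\R^n\to X$ is well defined, continuous (by the a priori bounds and Schauder estimates sketched in the text) and therefore compact, its domain being finite dimensional.

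With these verifications in place, the equality $\deg_{LS}(I-K,U_2,0)=\deg_B(I-K_1,U_1,0)$ is an immediate application of Theorem \ref{teorema Lx=Nx}: what is denoted $K_2$ there (namely $\pi\circ\mu$) is our $K_1$, and what is denoted $U_1$ and $U_2$ there correspond respectively to our $U_2\subset X$ and $U_1\subset\ker(L)$ under the common core relation, which is symmetric in this relabelling.

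The remaining equality $\deg_{LS}(I-K_2,U_2,0)=\deg_B(I-K_1,U_1,0)$ is an instance of Theorem \ref{teorema principal}, applied to the Krasnoselskii diagram furnished by Lemma \ref{kras-diag-1}, whose two composed operators are exactly $\mu\circ\pi$ (our $K_2$) and $\pi\circ\mu$ (our $K_1$). For this step one needs, in addition, a continuous right inverse $i$ of $\pi:X\to\ker(L)$, which is trivially available: send $c\in\R^n$ to the constant function $c\in X$, for which $\pi(c)=c$ and $\|i\|=1$.

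No new analytical obstacle remains: the degree-theoretic manipulations are purely formal once the diagram is in place, and the only potentially delicate ingredients -- the uniqueness-implies-existence argument giving $\mu$ and the verification of $(J)$ -- have already been carried out in the discussion preceding the statement. The proof therefore consists in collecting those ingredients and quoting Theorems \ref{teorema Lx=Nx} and \ref{teorema principal}.
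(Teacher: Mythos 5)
Your proposal is correct and follows essentially the same route as the paper, which proves this proposition simply by invoking ``the previous results'': the verifications of condition $(J)$, the surjectivity of $\pi$, the compactness of $\mu$ and $V\circ N$, and then Theorems \ref{teorema principal} and \ref{teorema Lx=Nx} with the roles of the indices $1$ and $2$ interchanged. Your explicit right inverse (sending $c$ to the constant function $c$) and the bookkeeping of the relabelling are exactly what the paper leaves implicit.
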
}  
{As in the preceding examples, several other fixed point operators can be defined; in particular, the one that arises on the 
Lyapunov-Schmidt reduction.  
In the first place, notice that $\pi$ is a projector, so  a fixed point operator $K_\gamma$ can be  defined in the same way  as before. Thus, taking $\gamma(u)=\overline u$ yields the operator
$$\overline u +|\Omega| \overline {Nu} + V(N(u)) - \overline{V(N(u))}
$$
which, in turn, is homotopic to 
$$K_3(u)=\overline u +|\Omega| \overline {Nu} + V(N(u)-\overline {Nu}) - \overline{V(N(u)-\overline {Nu})}.
$$
Finally, observe again that the latter operator is conjugated to the one  defined by the same formula, after replacing $D$ by 
$$\textrm{dom}(L)= C^{2,\alpha}(\overline\Omega)\cap \delta^{-1}(0),$$ 
which corresponds 
 to the Lyapunov-Schmidt 
 decomposition and,  as in section \ref{period}, may be called  $K_5$. 
 In this case, if one considers the operators 
 $K^\eta:u\mapsto v$, defined by 
 $$\Delta v(x) + \eta v(x)=f(x,u(x)) + \eta u(x) 
 $$
 where $\eta$ is not an eigenvalue of $-\Delta$ for the conditions (\ref{nonlocal}), then $K^\eta$ is linearly homotopic to 
 $K_5$ when $\eta>0$. Interestingly, the interaction of $\eta$ with the spectrum of $-\Delta$ is not relevant in this case: an intuitive explanation of this fact 
 shall be given in section \ref{further}.
 }
  
\subsection{Delay differential equations}
Let us consider the  problem of finding periodic solutions to the delay equation
\begin{align}
x'(t)&=f(t,x(t),x(t-\tau)) \quad  t\geq 0 \label{DDE}\\
x_0&=x_T\label{CPD}
\end{align} where $f:\R\times \R^n\times \R^n \to \R^n$ is continuous and Lipschitz in the last two variables and $T$-periodic in $t$, with $T\ge \tau$. Here, as usual, we employ the notation $x_t\in C[-\tau,0]$ for the function defined by $x_t(s):=x(s+t)$; thus, a solution of (\ref{DDE}) satisfying 
(\ref{CPD}) 
is extended  
to a periodic solution of (\ref{DDE}).  
In this case, we may consider the diagram $\mathcal{B}_{\tau}$ as follows

 \begin{figure}[h]
    \begin{center}
\includegraphics[scale=.45]{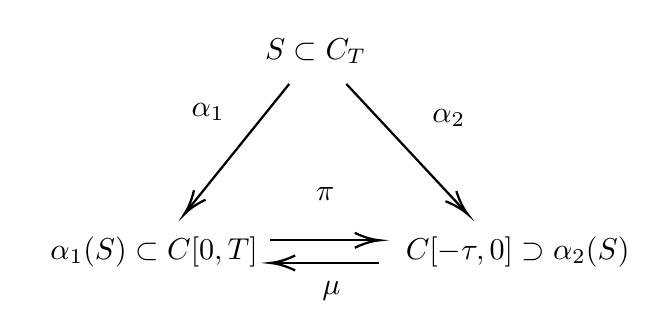}
\end{center}
\end{figure}

\noindent where:
\begin{itemize}
\item $C_T:=\{x\in C(\R,\R^n):x(t+T)=x(t)\}$. 
    \item $S\subset C_T$ is the set of  $T$-periodic solutions of (\ref{DDE}).      \item $\alpha_1$ denotes the  restriction to the interval $[0,T]$.
    \item $\alpha_2$ denotes the  restriction to the interval $[-\tau,0]$, that is $\alpha_2(x):=x_0$.
    \item $\pi(x):=x_T$
    \item  $\tilde{\mu}(y)$ is the solution of (\ref{DDE}) with initial condition  $x_0=y$, defined in $[-\tau,T]$ and  $\mu(y):= \tilde{\mu}(y)\vert_{[0,T]}$.
\end{itemize}

 It is easy to verify that   $\mu$ is compact and $\pi$ is linear and continuous. Furthermore, observe that it has a right inverse
 $i:C[-\tau,0]\to C[0,T]$ given by  $$i(y)(t)=\mathbb{I}_{[T-\tau,T]}(t)y(t-T)+\mathbb{I}_{[0,T-\tau)}(t)y(-\tau)$$
  where $\mathbb I$ stands for the indicator function. 
 
 We claim that the diagram, restricted to  $S$, $\alpha_1(S)$ and $\alpha_2(S)$, is commutative. 
 Indeed, for $x\in S$ it is seen that  $(\pi\circ\alpha_1)(x)=x_T$. Because
 $x\in C_T$, it follows that $x_T=x_0$ and hence $(\pi\circ\alpha_1)(x)=\alpha_2(x)$. On the other hand,  $(\mu\circ\alpha_2)(x)=\mu(x_0)$, which defines the unique solution of (\ref{DDE}) with initial condition $x_0$, restricted to   $[0,T]$. In other words,  $(\mu\circ\alpha_2)(x)=x|_{[0,T]}=\alpha_1(x)$.

Next, consider the following diagram $\mathcal{K}_\tau$  given by 
 \begin{figure}[h]
    \begin{center}
\includegraphics[scale=.45]{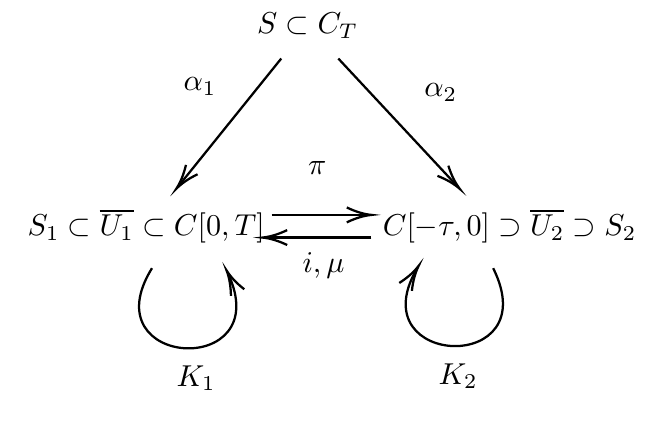}
\end{center}
\end{figure}

\noindent where:
\begin{itemize}
    \item $i$ is defined as before.%$i(y)(t)=\mathbb{I}_{[T-\tau,T]}(t)y(t-T)+\mathbb{I}_{[0,T-\tau]}(t)y(-\tau)$
    \item $K_1=\mu\circ\pi$ and $K_2=\mu\circ \pi$.
    \item $U_i$ have common core with respect to $\mathcal{B}_\tau$.
\end{itemize}
\begin{lemma}
$\mathcal{K}_\tau$ is a Krasnoselskii  diagram. 
\end{lemma}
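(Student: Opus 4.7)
The plan is to verify each of the three defining conditions of a Krasnoselskii diagram in turn. The common core property is postulated as a hypothesis on $U_1$ and $U_2$, so the work reduces to checking that $i$ is a continuous right inverse of $\pi$ and that $K_1$ and $K_2$ are fixed point operators (reading $K_2=\pi\circ\mu$, correcting an evident typo in the lemma's statement).

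For the right inverse, I would simply compute from the defining formula of $i$: for $y\in C[-\tau,0]$ and $s\in[-\tau,0]$ one has $s+T\in[T-\tau,T]$, so $\pi(i(y))(s)=i(y)(s+T)=y(s+T-T)=y(s)$. Continuity of $i$ at the only possible break point $t=T-\tau$ is immediate, since both branches of the definition take the value $y(-\tau)$ there.

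The main substance of the lemma is therefore the identification $\fp(K_i)=\alpha_i(S)$. The inclusions $\alpha_i(S)\subset\fp(K_i)$ are already recorded in the remark immediately after the definition of a Krasnoselskii diagram (these used only the commutativity of $\mathcal B$ restricted to $S$, $\alpha_1(S)$ and $\alpha_2(S)$, which was established in the preceding paragraph). For the reverse inclusion with $K_1$, I take $x\in\fp(K_1)$, so $x=\mu(x_T)=\tilde\mu(x_T)|_{[0,T]}$. The initial condition of $\tilde\mu$ gives $\tilde\mu(x_T)(s)=x_T(s)=x(s+T)=\tilde\mu(x_T)(s+T)$ for every $s\in[-\tau,0]$, so the values of $\tilde\mu(x_T)$ on $[-\tau,T]$ already match at the two ends modulo the period $T$. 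Its $T$-periodic extension to all of $\R$ is then well defined and continuous, and by $T$-periodicity of $f$ this extension satisfies (\ref{DDE}) on $\R$; hence it lies in $S$ and its $\alpha_1$-image is $x$.

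The corresponding argument for $K_2$ is entirely parallel: if $y\in\fp(K_2)$, then $y=\mu(y)_T$, i.e.\ $y(s)=\tilde\mu(y)(s+T)$, while the initial condition forces $y(s)=\tilde\mu(y)(s)$ on $[-\tau,0]$, producing the same endpoint matching and hence a $T$-periodic extension in $S$ with $\alpha_2$-image equal to $y$. The main point that requires care is confirming that the periodic extension does solve the DDE on all of $\R$, not merely on $[0,T]$; this reduces to the $T$-periodicity of $f$ in the time variable, which ensures that shifting a solution by $T$ preserves the equation. With the common core hypothesis, the right inverse property for $i$, and both fixed-point identifications verified, $\mathcal K_\tau$ satisfies the definition of a Krasnoselskii diagram.
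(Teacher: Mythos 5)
Your proof is correct and takes essentially the same route as the paper's: the paper also reduces the lemma to showing that $K_1$ and $K_2$ are fixed point operators and, for the reverse inclusions $\fp(K_i)\subset\alpha_i(S)$, uses exactly your endpoint-matching argument (there phrased by gluing $\hat x_T$ on $[-\tau,0]$ to $\hat x$ on $(0,T]$) to produce a $T$-periodic extension solving the delay equation. Your only departures are cosmetic: you cite the general remark for $\alpha_i(S)\subset\fp(K_i)$ where the paper reverifies it directly, and you spell out the computation $\pi\circ i=\id$ and the role of the $T$-periodicity of $f$, which the paper leaves implicit.
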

\begin{proof}
We already know that the linear continuous function  $i$ is a right inverse of  $\pi$; thus, it only remains to verify that  $K_1$ and $K_2$ are fixed point operators. 
Let $x\in S$, then  $$K_1(\alpha_1(x))=\mu(\pi \circ \alpha_1(x))=\mu(x_T)=\mu(x_0)=x\vert_{[0,T]}=\alpha_1(x).$$ 
Conversely, let 
 $\hat x\in \fp(K_1)$, that is, $\hat x=(\mu\circ \pi)(\hat x)=\mu(\hat x_T)$. Evaluation at $t=0$ yields
 $$\hat x(0)= \mu(\hat x_T)(0)=\hat x_T(0)= \hat x(T). 
 $$
Let us consider now the function $x:[-\tau,T]$
 defined by ${x}\vert_{[-\tau,0]}=\hat x_T$ and ${x}\vert_{(0,T]}=\hat x$.
 Since $\hat x(0)=\hat x(T)$, it is deduced that $x$ is  continuous. Moreover, over $[0,T]$ we have that $x=\hat x=\mu(\hat x_T)$, 
 which is the restriction of the unique  solution $y$ of (\ref{DDE}) satisfying $y_0=\hat x_T=x_0$. 
We conclude that $x=y$ and, because $x_0=x_T$, it follows that $x$ can be extended periodically to a solution of (\ref{DDE}), that is, $x\in S$ and $\alpha_1(x)=\hat x$. 

Concerning $K_2$, we proceed in an analogous  way. Let  $x \in S$, then $$K_2(\alpha_2(x))=(\pi\circ\mu)(x_T)=\pi(\mu(x_0))=\pi(x\vert_{[0,T]})=x_T=x_0=\alpha_2(x).$$
Conversely, let  $y\in \fp(K_2)$, that is $y=(\pi\circ\mu)(y)$. Let $\tilde y$ be the unique solution of (\ref{DDE}) such that $\tilde y_0=y$, 
then $y=\pi(\mu(y))=\tilde y_T$. This implies that 
$\tilde y$ is extended periodically to an element  $\tilde y\in S$ and $y=\alpha_2(\tilde y)$. 
\end{proof}
As in section \ref{period}, an affinity principle is obtained between $K_1$ and $K_2$. Note that $K_2$ corresponds again to the Poincar\'e operator $P$ although, in contrast with the non-delayed case, now $P$ is defined over the infinite-dimensional space $C[-\tau,0]$. 
 \begin{corollary}
$$\dls_{LS}(I-K_1,{U_1},0)=\dls_{LS}(I-P,{U_2},0)$$
\end{corollary}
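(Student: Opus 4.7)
The plan is to apply the abstract Theorem \ref{teorema principal} directly to the Krasnoselskii diagram $\mathcal{K}_\tau$, since the preceding lemma has already verified that $\mathcal{K}_\tau$ satisfies all the requirements of such a diagram. Specifically, the preceding lemma shows that $K_1 = \mu \circ \pi$ and $K_2 = \pi \circ \mu$ are fixed point operators for $\alpha_1(S)$ and $\alpha_2(S)$ respectively; the map $i$ constructed via the indicator-function formula is a continuous right inverse of $\pi$ (i.e., $\pi \circ i = \id_{C[-\tau,0]}$, which is immediate from the definition of $i$ since $i(y)$ restricted to $[T-\tau, T]$ is $y(\cdot - T)$); and the compactness of $\mu$, linearity/continuity of $\alpha_1, \alpha_2, \pi$, and the common core hypothesis for $U_1$ and $U_2$ are assumed.

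With these verifications in hand, Theorem \ref{teorema principal} yields
\[
\dls_{LS}(I - K_1, U_1, 0) = \dls_{LS}(I - K_2, U_2, 0).
\]
To finish, I would identify $K_2$ with the Poincar\'e operator $P$. By definition, for $y \in C[-\tau,0]$, $K_2(y) = \pi(\mu(y)) = \mu(y)_T$, where $\mu(y)$ is the solution on $[0,T]$ of \eqref{DDE} with initial segment $y$. This is precisely $P(y)$, the time-$T$ shift of the trajectory starting from initial data $y$, which is the standard definition of the Poincar\'e map for the delay equation. Substituting $K_2 = P$ gives the desired identity.

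The proof is therefore essentially a bookkeeping step: all the hard work, namely establishing that $\mathcal{K}_\tau$ is a Krasnoselskii diagram (and in particular that $K_1, K_2$ are fixed point operators — the part that needed the continuity argument for extending fixed points of $K_1$ to elements of $S$ via the identity $\hat x(0) = \hat x(T)$) has been carried out in the preceding lemma. The main potential obstacle, which has already been handled, is showing that $\fp(K_1) = \alpha_1(S)$ and $\fp(K_2) = \alpha_2(S)$; without these equalities, Theorem \ref{teorema principal} would not apply. Since they are established, the corollary follows without further computation.
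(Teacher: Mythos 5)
Your proposal is correct and matches the paper's intent exactly: the corollary is stated there as an immediate consequence of the preceding lemma (which establishes that $\mathcal{K}_\tau$ is a Krasnoselskii diagram) together with Theorem \ref{teorema principal}, followed by the identification $K_2=\pi\circ\mu=P$ on $C[-\tau,0]$. The paper gives no further argument, so your ``bookkeeping'' reading is precisely the intended proof.
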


Next, let us analyze the operator $K:C[0,T]\to C[0,T]$ given by $$K(x)(t)=x(T)+\int_0^t f(u, x(u), x(r(u)))du,$$ where $r(u)=:\begin{cases}
       u-\tau +T \quad & \ \text{if} \;\; 0\leq u<\tau \\
       u-\tau \quad & \ \text{if} \;\; \tau\leq u \leq T.  \\
     \end{cases}$
     
Notice that, despite the fact that $r$ is not continuous, the operator $K$ is well defined and compact.

\begin{lemma}
$K$ is a fixed point operator for  $\mathcal{B}_\tau$
\end{lemma}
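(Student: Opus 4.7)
The plan is to establish the two inclusions $\alpha_1(S)\subseteq \fp(K)$ and $\fp(K)\subseteq \alpha_1(S)$ by translating the periodicity of a solution into the piecewise structure of $r$ and vice versa. The whole argument is driven by the observation that, for a $T$-periodic function $y$ and for $t\in[0,\tau)$, the delayed value $y(t-\tau)$ (which has negative argument) coincides with $y(t-\tau+T)$, and this is exactly $y(r(t))$ with the ``twist'' branch of $r$; while for $t\in[\tau,T]$ one trivially has $y(t-\tau)=y(r(t))$. So the delay term and the argument appearing in $K$ agree on $[0,T]$ for any $T$-periodic extension.

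For the inclusion $\alpha_1(S)\subseteq \fp(K)$, I would take $y\in S$, set $x:=\alpha_1(y)=y|_{[0,T]}$, and apply the identity $y(t-\tau)=x(r(t))$ established above to rewrite the DDE as $x'(t)=f(t,x(t),x(r(t)))$ for $t\in[0,T]$. Integrating from $0$ to $t$ and using $x(0)=y(0)=y(T)=x(T)$ (here $T$-periodicity is used again) gives $x(t)=x(T)+\int_0^t f(u,x(u),x(r(u)))\,du=K(x)(t)$.

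For the converse $\fp(K)\subseteq\alpha_1(S)$, let $x\in \fp(K)$. Evaluating $x=K(x)$ at $t=0$ yields $x(0)=x(T)$, so the $T$-periodic extension $\tilde x\in C_T$ is well defined and continuous. Differentiating $K(x)=x$ (which is legitimate since the right-hand side of $K$ is $C^1$ in $t$) gives $x'(t)=f(t,x(t),x(r(t)))$ on $[0,T]$. Redoing the case analysis with $\tilde x$ in place of $y$ shows $x(r(t))=\tilde x(t-\tau)$ for every $t\in[0,T]$, so $\tilde x$ satisfies $\tilde x'(t)=f(t,\tilde x(t),\tilde x(t-\tau))$ on $[0,T]$. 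By the joint $T$-periodicity of $f$ and $\tilde x$, this identity propagates to all of $\mathbb R$, so $\tilde x\in S$ and $\alpha_1(\tilde x)=x$.

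The only potentially delicate point is ensuring that the extension $\tilde x$ really yields a $C^1$ solution at the joining points $t\in T\mathbb Z$ and, in the argument above, keeping track of the discontinuities of $r$ at $t=\tau$ when differentiating $K(x)$. However, the integrand in $K(x)(t)$ is continuous in $u$ on each subinterval $[0,\tau]$ and $[\tau,T]$ (because $x$ is continuous and the discontinuity of $r$ only shifts the argument by $T$), so the derivative of $K(x)$ exists and equals $f(t,x(t),x(r(t)))$ pointwise, which is all that is needed.
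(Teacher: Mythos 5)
Your proof is correct and follows essentially the same route as the paper's: both directions hinge on the identity $x(r(u))=\tilde x(u-\tau)$ for the $T$-periodic extension, the evaluation of $K(x)=x$ at $t=0$ to get $x(0)=x(T)$, and passing between the DDE and its integrated form. The only cosmetic difference is that the paper stays with the integral equation in the converse direction rather than differentiating $K(x)=x$, but your handling of the discontinuity of $r$ at $u=\tau$ (which is harmless once $x(0)=x(T)$ is known) makes the differentiation step legitimate.
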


\begin{proof} Let 
 $x\in S$, then  $x'(t)=f(t,x(t),x(t-\tau))$ for all $t\in \mathbb{R}$, that is
$$x(t)=x(0)+\int_0^t f(u,x(u),x(u-\tau))du
$$
and, because $x\in C_T$, it follows that 
$x|_{[0,T]}=\alpha_1(x)$ is a fixed point of $K$.

Conversely, if  
  $\hat x\in \fp(K)$, then clearly $\hat x(T)=\hat x(0)$. Moreover, if 
    $${x}(t):=\mathbb{I}_{[-\tau,0]}(t)\hat x_T(t)+\mathbb{I}_{[0,T]}(t)\hat x(t),$$
then 
$x_0=\hat x_T=x_T$ and $x(r(u))=x(u-\tau)$ for $u\in [0,T]$. Because $\hat x=K(\hat x)$, for $t\in [0,T]$  it follows that 
$$
x(t)=\hat x(t)= \hat x(T) +\int_0^t f(u,\hat x(u),\hat x(r(u))du =
x(T) +\int_0^t f(u, x(u), x(u-\tau))du.
$$ 
Thus, $x$ can be extended to a $T$-periodic solution of (\ref{DDE}) and hence $\hat x\in \alpha_1(S)$. 
\end{proof}

\begin{theorem}
$$\dls_{LS}(I-K,{U_1},0)=\dls_{LS}(I-K_2,{U_2},0)$$
\end{theorem}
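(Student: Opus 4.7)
The plan is to combine the preceding corollary, which yields $\dls_{LS}(I-K_1,U_1,0)=\dls_{LS}(I-K_2,U_2,0)$, with a homotopy reduction showing $\dls_{LS}(I-K,U_1,0)=\dls_{LS}(I-K_1,U_1,0)$. Since $K$ and $K_1$ are both compact, I would work with the linear homotopy $H_\lambda:=\lambda K+(1-\lambda)K_1$; admissibility amounts to showing that every fixed point of $H_\lambda$ belongs to $\alpha_1(S)$, which by the common-core hypothesis is disjoint from $\partial U_1$.

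Given $x\in\overline{U_1}$ with $x=H_\lambda(x)$, evaluation at $t=0$ immediately yields $x(0)=\lambda x(T)+(1-\lambda)\mu(x_T)(0)=x(T)$ (using $\mu(x_T)(0)=x_T(0)=x(T)$), so $x$ admits a $T$-periodic extension $\hat x$ to $\R$. I would then define $y$ on $[-\tau,T]$ by $y|_{[-\tau,0]}:=x_T$ and $y|_{[0,T]}:=\mu(x_T)$; by construction $\hat x$ and $y$ coincide on $[-\tau,0]$. Next, the piecewise definition of $r$ together with $x(0)=x(T)$ gives $x(r(t))=\hat x(t-\tau)$ for every $t\in[0,T]$. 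Subtracting the integral form of $y'=f(\cdot,y(\cdot),y(\cdot-\tau))$ from the fixed-point identity for $x$, the difference $z:=\hat x-y$ satisfies $z\equiv 0$ on $[-\tau,0]$ and, by the Lipschitz hypothesis on $f$,
$$|z(t)|\le\lambda l\int_0^t\bigl(|z(s)|+|z(s-\tau)|\bigr)\,ds.$$

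The final step is to conclude $z\equiv 0$ by iterating Gronwall over the intervals $[k\tau,(k+1)\tau]$: assuming inductively that $z$ vanishes on $[-\tau,k\tau]$, one has $z(s)=0$ for $s\le k\tau$ and $z(s-\tau)=0$ for $s\in[k\tau,(k+1)\tau]$, so the estimate reduces to the classical Gronwall form on the new interval and forces $z$ to vanish there. Finitely many steps cover $[0,T]$, giving $x=K_1(x)\in\fp(K_1)=\alpha_1(S)$, hence admissibility, and homotopy invariance of the Leray-Schauder degree concludes. The principal obstacle is the delayed bookkeeping: one must check that the periodic extension $\hat x$ and the auxiliary function $y$ share the same initial segment $x_T$ on $[-\tau,0]$ (so the induction can launch from a zero base) and that the identification $x(r(t))=\hat x(t-\tau)$ survives the discontinuity of $r$ at $t=\tau$, both of which hinge precisely on the periodicity $x(0)=x(T)$ derived at the outset.
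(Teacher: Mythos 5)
Your proposal is correct and follows essentially the same route as the paper: the reduction via the corollary, the linear homotopy $H_\lambda=\lambda K+(1-\lambda)K_1$, deriving $x(0)=x(T)$ at $t=0$, gluing the periodic extension, and comparing with $y=\tilde\mu(x_T)$. The only difference is cosmetic: where the paper invokes uniqueness for the DDE $w'(t)=\lambda f(t,w(t),w(t-\tau))+(1-\lambda)f(t,y(t),y(t-\tau))$ with $w_0=y_0$ (noting in a remark that this follows from Gronwall), you spell that uniqueness out explicitly via the method of steps.
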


\begin{proof}
From the previous corollary, it suffices to verify that  $\dls_{LS}(I-K,{U_1},0)=\dls_{LS}(I-K_1,{U_1},0)$.
To this end, consider again  the linear  homotopy  $H_{\lambda}=\lambda K + (1-\lambda)K_1$ with  $\lambda\in [0,1]$. Suppose that $\hat x\in \overline U_1$ is a fixed point of $H_\lambda$, then 
$$\hat x(t)=\lambda \left( \hat x(T)+\int_0^t f(u, \hat x(u),\hat x(r(u)))du \right) +  (1-\lambda)  (\mu \circ \pi)(\hat x)(t).  
$$
Evaluation at  $t=0$ yields $\hat x(0)=\hat x(T)$. 
As in the previous theorem, let
$x:=\hat x_T\mathbb I_{[-\tau,0)} + \hat x\mathbb I_{[0,T]}\in C[-\tau, T]$, then 
\begin{equation}\label{hom-delay}
{x}(t)=\lambda \left( {x}(T)+\int_0^t f(u, {x}(u), {x}(u-\tau)du \right) +  (1-\lambda) \tilde{\mu}(  {x}_T)(t)    
\end{equation} 
for all $t\in [0,T]$ and ${x}_0=x_T 
$.  Now consider ${y}=\tilde{\mu}({x}_T)$, then 
\begin{align}
{y}'(t)&=f(t,{y}(t),{y}(t-\tau)),  \;\;\;\;  \forall t\in [0,T]\label{tilde y derivada}\\
{y}_0&={x}_0\label{tilde y_0}
\end{align}
Moreover, from (\ref{hom-delay}), we  obtain
\begin{equation}\label{eq homotopia delay derivado}
    {x}'(t)=\lambda  f(t,{x}(t),{x}(t-\tau))+(1-\lambda)f(t,{y}(t),{y}(t-\tau)).
\end{equation}
Because $y$ is a solution of the latter equation and $y_0=x_0$, 
it follows from the uniqueness that $y=x$.  

\begin{obs}
As before, using Gronwall's lemma it follows, more generally, that condition $(J)$ is fulfilled. 
\end{obs}

  As a conclusion, we deduce that $x$, extended periodically to $\mathbb R$, belongs to  $S$ and $\alpha_1(x)=\hat x$. Moreover,  since $\alpha_1(S)\cap \partial(U_1)=\emptyset$ we conclude that  $\hat x\in U_1$; thus, $H_\lambda$ has no fixed points on 
 $\partial U_1$ and the result follows.
\end{proof}

\subsubsection{Other fixed point operators in $C[0,T]$ and $C_T$}

%\textcolor{blue}{>Qu\'e tal si definimos $N_r(x)(t):=f(t,x(t),x(r(t))$? Esto lo podemos hacer desde antes, para definir $K$ y, en lo que viene ahora, podemos decir que $K_6$  y $K_7$ son ``los mismos" que $K_3$ y $K_5$.  A grandes rasgos, las proposiciones se deducen ``in the same way". }

For notational convenience, let us define  
 $$Vx(t)=\int_0^tx(s)ds$$
$$N_r(x)(t):=f(t,x(t),x(r(t))$$ 
with $r$ as before
and observe that, although $N_r(x)$ is not a continuous function, the operator $K_6:C[0,T]\to C[0,T]$ given by
$$
K_6(x)(t):=\overline x + T\overline{N_r(x)} + V(N_r(x))(t) - 
\overline {V(N_r(x))}
$$
is well defined and compact. 
In the same way, we may define the operators
{$K_7:C[0,T]\to C[0,T]$ and $K_8:C_T\to C_T$ given by
$$K_7(x)(t):= \overline x + T\overline{N_r(x)} + 
V(N_r(x)-\overline{N_r(x)})(t) - \overline{V(N_r(x)-\overline{N_r(x)})}
$$
$$K_8(x)(t):= \overline x + T\overline{N(x)} + 
V(N(x)-\overline{N(x)})(t) - \overline{V(N(x)-\overline{N(x)})}.
$$
}
{The operators are  analogous, respectively, to $K_4$,  $K_3$ and $K_5$ in section \ref{period}. The proof of the following proposition is left for the reader.}

\begin{proposition}
$K_6, K_7$ and $K_8$ are fixed point operators and
$$\deg_{LS}(I-K_6,U,0)=\deg_{LS}(I-K_7,U,0)= \deg_{LS}(I-K_8,\tilde U,0)
$$
provided that the open bounded sets 
$U\subset C[0,T]$ and $\tilde U\subset C_T$ have common core. 
\end{proposition}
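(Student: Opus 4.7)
The plan is to mirror the strategy used in Section \ref{period} for $K_3$, $K_4$, $K_5$, adapting it to the delay setting. I would first establish the fixed-point property for each of the three operators, then chain two degree equalities: $K_6$ is linked to $K_7$ by a linear homotopy exactly as in Corollary \ref{k3}, and $K_7$ is linked to $K_8$ by a conjugacy argument via restriction/periodic extension, as in the proposition relating $K_3$ and $K_5$.

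To show $\mathrm{fp}(K_6)=\mathrm{fp}(K_7)=\alpha_1(S)$, the key device is averaging. If $x=K_6(x)$, averaging both sides over $[0,T]$ gives $\overline{x}=\overline{x}+T\,\overline{N_r(x)}$, hence $\overline{N_r(x)}=0$; but then $x(t)-x(0)=V(N_r x)(t)$, so $x$ is absolutely continuous with $x'=N_r(x)$ a.e.\ and $x(T)-x(0)=T\,\overline{N_r(x)}=0$. Once the periodic boundary condition holds, $N_r(x)$ is actually continuous at $t=\tau$ (because $x(T)=x(0)$), so $x\in C^1[0,T]$ and its $T$-periodic extension lies in $S$. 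The converse is a direct computation. For $K_7$, the same averaging forces $\overline{N_r(x)}=0$, and in that regime $K_7(x)=K_6(x)$, so the fixed-point sets coincide. For $K_8$, an analogous computation on $C_T$ with $N$ in place of $N_r$ (now there is no discontinuity issue since $x\in C_T$) identifies $\mathrm{fp}(K_8)$ with the set $\tilde S$ of $T$-periodic solutions.

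For the equality $\deg_{LS}(I-K_6,U,0)=\deg_{LS}(I-K_7,U,0)$, I would take $H_\lambda=\lambda K_6+(1-\lambda)K_7$. Averaging in $H_\lambda(x)=x$ again gives $\overline{N_r(x)}=0$, whence $H_\lambda(x)=K_6(x)=K_7(x)$, so $\mathrm{fp}(H_\lambda)=\alpha_1(S)$ for every $\lambda$; by the common core hypothesis $\partial U\cap \alpha_1(S)=\emptyset$, which makes the homotopy admissible.

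For $\deg_{LS}(I-K_7,U,0)=\deg_{LS}(I-K_8,\tilde U,0)$ I would invoke Proposition \ref{conjugacion}. Set $X_0:=\{x\in C[0,T]:x(0)=x(T)\}$, let $\tilde\pi:C_T\to X_0$ be restriction and $\tilde i:X_0\to C_T$ be $T$-periodic extension, with $\tilde\pi\circ\tilde i=\mathrm{id}_{X_0}$. The essential identity to verify is
\[
K_8=\tilde i\circ K_7\circ\tilde\pi,
\]
which hinges on two observations: (i) for $x\in C_T$ and $t\in[0,T]$, the definition of $r$ combined with $T$-periodicity of $x$ yields $N_r(x|_{[0,T]})(t)=f(t,x(t),x(t-\tau))=N(x)(t)$, and the averages over $[0,T]$ and over $\mathbb{R}/T\mathbb{Z}$ coincide; (ii) $\mathrm{Im}(K_7)\subset X_0$, because $V(N_r x-\overline{N_r x})$ has zero increment on $[0,T]$ (its integrand has zero mean). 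The common core assumption on $(U,\tilde U)$ then matches precisely the hypothesis $\tilde U\cap \tilde i(\mathrm{fp}(K_7))=\tilde i(\mathrm{fp}(K_7)\cap U)$ with $\partial\tilde U\cap \tilde i(\mathrm{fp}(K_7))=\emptyset$ (since $\tilde i(\alpha_1(S))=\tilde S$), so Proposition \ref{conjugacion} delivers the degree equality. I expect the main obstacle to be a clean verification of the conjugation formula, since it requires trailing the piecewise structure of $r$ through the averages and the integrated term, and checking the $X_0$-invariance of $K_7$, which, once done carefully, immediately reduces the degree identity to the abstract conjugacy principle.
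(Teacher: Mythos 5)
Your proposal is correct and coincides with the proof the paper intends (it explicitly leaves it to the reader, noting that $K_6,K_7,K_8$ are the analogues of $K_4,K_3,K_5$): the averaging argument forcing $\overline{N_r(x)}=0$ establishes the fixed-point property and the admissibility of the linear homotopy between $K_6$ and $K_7$, and the identity $K_8=\tilde i\circ K_7\circ\tilde\pi$ together with $\mathrm{Im}(K_7)\subset X_0$ reduces the last equality to Proposition \ref{conjugacion}, exactly as in the proofs of Corollary \ref{k3} and the $K_3$--$K_5$ proposition. The one point worth keeping explicit is the continuity of $N_r(x)$ at $t=\tau$ once $x(T)=x(0)$ is known, which you do address.
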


\section{Final comments and open questions}
\label{further}

\begin{enumerate}
    \item In section \ref{period}, 
    an alternative proof of the case $\eta<0$ in  Proposition \ref{eta-period} may be performed as follows. 
    In the first place, observe that an analogue of Corollary \ref{k3} is obtained for $\hat K_3$ if we take, in the original diagram,
$$\hat \pi_{\ker(L)}(x)= x(T), \qquad \hat\delta(x)=x(0)- x(T),$$
$$
\hat V(x)(t)=-\int_t^T x(s)ds, \qquad \hat\mu(x_T)(t)= \hat \Phi(t, x_T),
$$
where $\hat \Phi(t,A)$ is defined as the unique solution of the equation such that $\hat \Phi(T,A)=A$. 
Then $\hat \pi(x)=x(0)$ and
$$\hat K(x)(t) = x(0) - \int_t^T f(s,x(s))ds, $$
 while
$$\hat K_1(x)(t)= \hat  \Phi(t,x(0)).
$$
Thus, the abstract result implies that, if the   
common core assumption is satisfied, then the degree of   $I-\hat K_1$ coincides with the degree of $I-\hat K_2$, where
$$\hat K_2(A)= (\hat \mu\circ\hat \pi) =\hat \Phi(0,A).$$
In other words, $\hat K_2$ can be identified with the  inverse $\hat P$ of the  Poincar\'e map; thus, the degree computation follows  from the next lemma:

%\textcolor{blue}

\begin{lemma}
Let 
$\Omega\subset \R^n$ be open and bounded such that $P$ has no fixed points on $\partial\Omega$, then 
$$\deg_B(I-\hat P,P(\Omega),0)= (-1)^n\deg_B(I- P,\Omega,0).$$
\end{lemma}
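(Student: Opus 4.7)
Plan: The lemma compares the degrees of $I-P$ on $\Omega$ and of $I-\hat P$ on $P(\Omega)$, where $\hat P$ is the inverse of the Poincaré map of the underlying ODE. The whole strategy rests on the fact that, although $P$ is merely an abstract homeomorphism in the statement, it actually arises as the time-$T$ map of an autonomous-in-disguise smooth flow. By Liouville's formula applied to the variational equation, $\det DP(x)=\exp\bigl(\int_0^T\operatorname{div}_xf(s,\Phi(s,x))\,ds\bigr)>0$ for every $x\in\overline\Omega$, so $P$ is an orientation-preserving $C^1$ diffeomorphism of $\overline\Omega$ onto $\overline{P(\Omega)}$. This is the structural input that makes the $(-1)^n$ factor come out right.

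First I would write down the tautological identity $(I-\hat P)\circ P=-(I-P)$, which follows from $\hat P(P(x))=x$ by the computation $(I-\hat P)(P(x))=P(x)-x=-(I-P)(x)$. Next I would verify that both degrees in the statement are well defined: since $P$ is a homeomorphism on $\overline\Omega$ we have $\partial P(\Omega)=P(\partial\Omega)$, and any $y=P(x)\in\partial P(\Omega)$ satisfies $\hat P(y)=x\ne P(x)=y$ by the hypothesis on $\partial\Omega$, so $I-\hat P$ has no zero on $\partial P(\Omega)$.

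The final step is the change-of-variable property of the Brouwer degree: for an orientation-preserving $C^1$ diffeomorphism $P:\overline\Omega\to\overline{P(\Omega)}$ and any continuous $g:\overline{P(\Omega)}\to\R^n$ that is nonzero on $\partial P(\Omega)$, one has $\deg_B(g,P(\Omega),0)=\deg_B(g\circ P,\Omega,0)$. Applying this to $g=I-\hat P$ and combining it with the identity from the first step yields
$$\deg_B(I-\hat P,P(\Omega),0)=\deg_B(-(I-P),\Omega,0)=(-1)^n\deg_B(I-P,\Omega,0),$$
where the last equality is the elementary rule $\deg_B(-h,\Omega,0)=(-1)^n\deg_B(h,\Omega,0)$.

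The main obstacle is the orientation issue: a purely topological homeomorphism $P$ would only guarantee $\deg_B(g\circ P,\Omega,0)=\varepsilon\deg_B(g,P(\Omega),0)$ with $\varepsilon\in\{\pm1\}$, and pinning down $\varepsilon=+1$ is exactly what forces one to invoke that $P$ is the time-$T$ map of a flow (Liouville). If one prefers to bypass the abstract composition formula, an alternative is to argue locally at each fixed point after a small perturbation making the fixed points nondegenerate: the indices are then $\operatorname{sgn}\det(I-DP(x_0))$ and $\operatorname{sgn}\det(I-DP(x_0)^{-1})$, and the elementary identity $\det(I-A^{-1})=(-1)^n\det(A)^{-1}\det(I-A)$ together with $\det DP(x_0)>0$ provides the factor $(-1)^n$ index by index; summing and letting the perturbation vanish recovers the global formula.
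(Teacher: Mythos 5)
Your proposal is correct, and your main line of argument is genuinely different from the paper's. The paper argues locally: using Sard's theorem it assumes $0$ is a regular value of $I-P$, reduces the degree to a sum of local indices of the linearizations, and for the linear Poincar\'e map $P_L(x_0)=M(T)x_0$ compares $\det(I-M(T))$ with $\det(I-M(T)^{-1})$ via the identity $\det(I-A^{-1})=(-1)^n\det(A)^{-1}\det(I-A)$, obtaining the sign $\det M(T)>0$ by continuity of $t\mapsto\det M(t)$ from $\det M(0)=1$ (equivalently, by Liouville). This is precisely the ``alternative'' you sketch in your last sentences. Your primary route is instead global: the tautological identity $(I-\hat P)\circ P=-(I-P)$ combined with the composition (multiplication) theorem for the Brouwer degree, $\deg_B(g,P(\Omega),0)=\deg_B(g\circ P,\Omega,0)$ for the orientation-preserving diffeomorphism $P$, plus the elementary rule $\deg_B(-h,\Omega,0)=(-1)^n\deg_B(h,\Omega,0)$. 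Both proofs ultimately hinge on the same structural fact --- that $\det DP>0$ because $P$ is the time-$T$ map of a flow --- and you correctly identify this as the crux that pins down the orientation. What your route buys is the elimination of the regular-value perturbation and the index-by-index bookkeeping, at the price of invoking the heavier multiplication theorem (and the verification that $\partial P(\Omega)=P(\partial\Omega)$, which you do supply); the paper's route is more self-contained, using only additivity, excision and the degree of linear maps. Neither version has a gap, though in both one should note that the argument requires $f$ to be $C^1$ in $x$ so that $P$ is differentiable.
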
 
\begin{proof}
 It suffices to observe that   $P$ and 
 $\hat P$ have the same fixed points and, due to the properties of the Brouwer degree, we may assume that  
 $0$ 
is a regular value of $I-P$. 
Because the degree over small balls coincides with the degree of its linearization, we only need to prove that $$\deg_B(I-P_L,B_r(0),0)=(-1)^n\deg_B(I-\hat P_L,P_L(B_r(0)), 0),$$
where $P_L$ is the Poincar\'e map associated to  a linear system $x'(t)=B(t)x(t)$, where $B$ is a $T$-periodic matrix such that $P_L$ has no nontrivial fixed points. Let $M(t)$ be the fundamental matrix of the system $x'=Bx$ such that $M(0)=I$, 
then $P_L(x_0)=M(T)x_0$, so the degree of $I- P_L$ is simply computed by
$$\deg_B(I-P_L,B_r(0),0)= sgn(\det(I-M(T))).$$
On the other hand, 
$$\deg_B(I-\hat P_L,P_L(B_r(0)), 0)= 
sgn(\det(I-M(T)^{-1})) $$
$$= (-1)^nsgn\det(M(T)^{-1})sgn(\det(I-M(T))).$$
Thus, the result follows from the fact that $M(t)$ 
is invertible for all $t$ and, by continuity,
$$sgn(\det(M(T))= sgn(\det(M(0))=1.
$$
\end{proof}

Also, it is worth noticing
that, in the context of the coincidence degree theory, it is well known that
if the assumption
\begin{equation}
\label{lyap-schm}  x'\ne \lambda N_T(x),\qquad x\in \partial\tilde U,  \, \lambda\in (0,1]  
\end{equation}
holds, then  
$$\deg_{LS}(I- K_5,\tilde U,0)=\deg_B(\phi,  \tilde U\cap\R^n,0)
$$
and 
$$\deg_{LS}(I-\hat K_5,\tilde U,0)=\deg_B(-\phi,  \tilde U\cap\R^n,0)
$$
where $\phi:\R^n\to \R^n$ is defined by 
$$\phi(x):= -TN_T(x). 
$$
In other words, when (\ref{lyap-schm}) is satisfied, 
taking $U, \tilde U\subset C_T$ as in the previous proposition it is seen in a direct way that
$$\deg_{LS}(I-K_5,\tilde U,0)= 
(-1)^n \deg_{LS}(I-\hat K_5, U,0).
$$

\item Concerning the Dirichlet problem in section \ref{dirich}, several comments can be made. To begin, observe that the   explicit form of the operator $K_2$ is 
$$K_2(ta+b)= (\pi\circ\mu)(ta+b)= t[x(1)+x'(0)] + 2x(0) = t[x(1)+a] + 2b,
$$
where $x=\mu(ta+b)=\Phi^1(\cdot,b,a)$.  
In particular,  if $K_2(ta+b)=ta+b$, then $b=0$: in other words, 
the fixed points of $K_2$ belong to the subspace $W =\{ ta:a\in \R^n\}\subset\ker(L)$.    
Notice, furthermore, that $K_2(W)\subset W$. 
This yields a direct computation of  
degree as follows. According to the standard properties 
of the Brouwer degree, may assume that $0$ is a regular value of $I-K_2$, 
then 
$$\deg_B(I-K_2,U_2,0)=\sum_{ta\in S_2} sgn(\det(I-DK_2(ta))) 
$$
where $S_2:=\{ ta\in U_2:K_2(ta)=ta\}$. A simple computation shows that $DK_2(ta)$ can be identified with the $\R^{2n\times 2n}$ matrix
given by 
$$\left(\begin{array}{cc}
     D_a\Phi^1(1,0,a) + I_n &  D_b\Phi^1(1,0,a)\\
    0 & 2I_n
\end{array}
\right)
$$
where  $I_n\in \R^{n\times n}$ denotes the identity matrix.
 This implies that $$
sgn(\det(I-DK_2(ta))) = sgn\left(\det\left(D_a\Phi^1(1,0,a) \right)\right).$$
Moreover, taking into account   the isomorphism $\varphi|_W:ta\mapsto a $
it is clear that the zeros of 
$\s$ in $\varphi(\overline U_2)$ coincide with the elements of $\varphi(S_2)$. Because
$D\s(a)=D_a\Phi^1(1,0,a)
$, we conclude that 
  $$\deg(\s,\varphi(U_2),0)=
  \sum_{a\in \varphi(S_2)} sgn\left(\det\left(D_a\Phi^1(1,0,a) \right)\right)
$$
and so the result
is deduced.

\item Still regarding section \ref{dirich}, we may define a variety of fixed point operators as follows. Firstly,
 {inspired by Proposition \ref{conjugacion},
we may define an operator $\tilde K$ as a conjugate of the function $g$ defined in the proof of Proposition \ref{deg-shoot}. With this in mind, let us define $\tilde i:W\to C^1[0,1]$ given by $\tilde i(ta):=\mu(ta)=\Phi^1(\cdot,0,a)$ and $\tilde\pi:C^1[0,1]\to W$ given by 
$\tilde \pi(x):=tx'(0)$. Then 
 $\tilde\pi\circ \tilde i=\id_{W}$ and, according to the definition  $g(ta):=t(a-\mathcal S(a))$, we may compute  $\tilde K=\tilde i\circ g\circ \tilde \pi$ to obtain:
$$\tilde K(x)= \Phi^1(\cdot,0,x'(0)-\mathcal S(x'(0))).
$$
 It is interesting to observe that $\tilde K$ and $K:=\pi + V\circ N$ satisfy the affinity principle, although they are not linearly homotopic. The same happens with the operator $K_1=\mu\circ \pi$, which reads
$$K_1(x)=\mu[t(x(1)+x'(0)) +2x(0)].$$  
Indeed, from the equality  $x=\lambda K_1(x) + (1-\lambda)\tilde K(x)$ one obtains  $x(0)=2\lambda x(0)$, but the conclusion $x(0)=0$ does not necessarily hold when  $\lambda=\frac 12$.} 

 {More generally, a family of fixed point operators related with all the preceding ones can be obtained in the following way. Consider the mapping $\delta_\theta:=\theta\circ\delta$, where $\theta:\ker(L)\to \ker(L)$ is an arbitrary orientation-preserving isomorphism. It is well known that   $\theta$ and the identity belong to the same connected component of the linear group $GL(\ker(L))$ and, thus, we may define $\pi_\lambda:=  \pi_{\ker(L)} + h(\lambda) \circ\delta$, where $h:[0,1]\to GL(\ker(L))$ is continuous with $h(0)=I$ and $h(1)=\theta$. Observe that $\pi_\lambda$ may not be necessarily onto but, however, the corresponding $K_\lambda$ is a fixed point operator: if $x=K_\lambda(x)$, then $L(x)=N(x)$; moreover, applying   
$\pi_{\ker(L)}(x)$ it follows that $h(\lambda)(\delta(x))=0$ and, consequently, $\delta(x)=0$. 
If one takes,  
instead, an element $\theta\in GL(\ker(L))$ that does not preserve orientation, then it belongs to the same component of 
$\hat \delta(x)=-tx(1)+x(0)$ and it follows that $\deg(I-K_2(\theta),U_2,0)=(-1)^n\deg(\mathcal S,\varphi(U_2),0).$ }

{It is clear that one may also replace the projector $\pi_{\ker(L)}$ by another one, although the application of the conjugacy lemma relies on the existence of $\mu$. On the other hand, a family of fixed point operators of different nature is given by $K^\eta:x\mapsto y$, where $y$ is the unique solution of the problem  
$$y''(t)+\eta y(t) = f(t,x(t)) + \eta x(t),\qquad y(0)=y(1)=0$$ and 
$\eta$ is not an eigenvalue of $-L$ for the Dirichlet condition. 
Here, the degree of $I-K^\eta$ will depend on the position of $\eta$ with respect to the spectrum of $-L$; this study shall be the subject of a forthcoming paper. }

\item Analogously, an operator $K^\eta$ was also  defined in 
section \ref{elliptic-sec}, assuming that $-\eta$ does not belong to the spectrum  of $-\Delta$ under the nonlocal conditions (\ref{nonlocal}).  
As mentioned, in this case
the degree of $I-K^\eta$ depends only on the sign of $\eta$: 
more precisely, following the ideas of section \ref{period}
it can be proven that  
the degree of $I-K^\eta$ coincides with the degree of $I-K_5$, multiplied by  
$sgn(\eta)^n$. For $\eta>0$, we already observed that $K^\eta$ is homotopic to $K_5$; when $\eta<0$, an intuitive explanation reads as follows. For simplicity, assume that   $d=1$ (ODE case), which corresponds, when $\Omega=(0,2\pi)$, to the second order $2\pi$-periodic problem. 
%: indeed, observe that the condition $u|_{\partial \Omega}=constant$ is obviusly equivalent to $u(0)=u(2\pi)$ and, on the other hand, the condition $\int_{\partial\Omega}\frac{\partial u}{\partial \nu}dS=0$ simply says that $\int_\Omega \Delta u dx=0$ which means, when $d=1$, that $u'(0)=u'(2\pi)$. 
  With this in mind, consider the  linear autonomous problem
 $$u''(t)=Au(t)
 $$
 with $A\in \R^{n\times n}$ and suppose there are no nontrivial $T$-periodic solutions. If one defines as before the mapping $u\mapsto v$ by solving the equation
 $$v'' +  \eta v = Au +\eta u,
 $$
 then $(I-K^\eta)(u)=u-v$ is computed in terms of the Fourier expansion of $u$ given by
 $$u= a_0 + \sum_{n\in\mathbb N}[\cos(nt) a_n +\sin(nt)b_n]
 $$
 as 
 $$(I-K^\eta)(u)= \left(I-\frac A\eta\right)a_0 + 
 \sum_{n\in\mathbb N} \left(\frac{\eta I - A}{\eta-n^2}
 \right)[\cos(nt) a_n +\sin(nt)b_n].
 $$
 This implies that the subspaces  $S_n:=\textrm{span}\{\cos(nt), \sin(nt)\}$ are invariant and, furthermore, the associated matrix of $(I-K^\eta)|_{S_n}$ is 
 $$M_n=\left(\begin{array}{cc}
 \frac{\eta I - A}{\eta-n^2}      & 0 \\
     0 & \frac{\eta I - A}{\eta-n^2} 
 \end{array}
 \right)
 $$
 which satisfies $sgn(\det(M_n))>0$. In other words,  the degree computation depends only on the sign of $\det\left(I-\frac A\eta\right)$.

\end{enumerate}

\subsection*{Acknowledgement}
The authors would like to thank Prof. Rafael Ortega for the reading of a first version of the manuscript and his fruitful comments.  
This work was  partially supported by  
projects UBACyT
20020190100039BA  and PIP 11220130100006CO CONICET.

\end{document}